\definecolor{rojo}{rgb}{1,0,0}
\definecolor{blanco}{rgb}{1,1,1}
\newcommand{\bigint}{\begin{picture}(10,10)
\put(-1,2){\line(1,0){10}}
\end{picture}\kern-14pt\int}
\title{{\bf The maximal singular integral: estimates in terms of the singular integral \\
}}
\author{\Large{\Large Joan Verdera}}
\newtheorem{teo}{Theorem}
\newtheorem{lemma}[teo]{Lemma}
\newtheorem*{example}{Example}
\theoremstyle{definition}
\newtheorem*{gracies}{Acknowledgements}
\newcommand{\C}{\mathbb{C}}
\newcommand{\R}{\mathbb R}
\begin{document}
\date{}

\maketitle

\begin{abstract}
This paper considers estimates of the maximal singular integral
$T^*f$ in terms of the singular integral $Tf$ only. The most basic
instance of the estimates we look for is the $L^2(\R^n)$ inequality
$\|T^*f\|_2 \le C\, \|Tf\|_2 $. We present the complete
characterization, recently obtained by Mateu, Orobitg, P\'{e}rez and
the author, of the smooth homogeneous convolution
Calder\'{o}n--Zygmund operators for which such inequality holds. We
focus attention on special cases of the general statement to convey
the main ideas of the proofs in a transparent way, as free as
possible of the technical complications inherent to the general
case. Particular attention is devoted to higher Riesz transforms.
\end{abstract}

\section{Introduction} \label{Verdera:Sec:1}

In this expository paper we consider the problem of estimating the
Maximal Singular Integral $T^*f$ only in terms of the Singular
Integral $Tf.$  In other words, the function $f$ should appear in
the estimates only through $Tf.$ The context is that of classical
Calder\'{o}n--Zygmund theory: we deal with smooth homogeneous
convolution singular integral operators of the type
\begin{equation}\label{SI}
Tf(x)= p.v. \int f(x-y)\,K(y)\,dy \equiv \lim_{\epsilon\rightarrow
0} T^\epsilon f(x)\,,
\end{equation}
where
$$
T^{\epsilon}f(x)= \int_{| y-x| > \epsilon} f(x-y) K(y) \,dy
$$
is the truncated integral at level $\epsilon$. The kernel $K$ is

\begin{equation}\label{K}
K(x)=\frac{\Omega(x)}{|x|^n},\quad x \in \mathbb{R}^n \setminus
\{0\}\,,
\end{equation}
where $\Omega$ is a (real valued) homogeneous function of degree $0$
whose restriction to the unit sphere $S^{n-1}$ is of
class~$C^\infty(S^{n-1})$ and satisfies the cancellation property
\begin{equation}\label{can}
\int_{|x|=1} \Omega(x)\,d\sigma(x)=0\,,
\end{equation}
$\sigma$ being the normalized surface measure on $S^{n-1}$. The
maximal singular integral is
$$
T^{\star}f(x)= \sup_{\epsilon > 0} | T^{\epsilon}f(x)|, \quad x \in
\mathbb{R}^n \,.
$$
As we said before, the problem we are envisaging consists in
estimating $T^{\star}f$ in terms of $Tf$ only. The well known
Cotlar's inequality
\begin{equation}\label{MS}
T^{\star}f(x) \leqslant C  \left(M(Tf)(x) + Mf (x)\right),\quad x
\in \mathbb{R}^n \,,
\end{equation}
is of no use because it contains the term $f$ besides $Tf$. The most
basic form of the estimate we are looking for is the $L^2$
inequality
\begin{equation} \label{L2}
 \| T^{\star} f  \|_2 \leqslant  C \| T f \|_2,\quad f \in L^2(\mathbb{R}^n) \,.
\end{equation}
This problem arose when the author was working at the David--Semmes
problem (\cite[p.139, first paragraph]{DS}). It was soon discovered
(\cite{MV}) that the parity of the kernel plays an essential role.
Some years after, a complete characterization of the even operators
for which \eqref{L2} holds was presented in \cite{MOV} and
afterwards the case of odd kernels was solved in \cite{MOPV}.
Unfortunately there does not seem to be a way of adapting the
techniques of those papers to the Ahlfors regular context in which
the David--Semmes problem was formulated.

The proof of the main result in \cite{MOV} and \cite{MOPV} is long
and technically involved. It is the purpose of this paper to
describe the main steps of the argument in the most transparent way
possible. We give complete proofs of particular instances of the
main results of the papers mentioned, so that the reader may grasp,
in a simple situation, the idea behind the proof of the general
cases. Thus, in a sense, the present paper could serve as an
introduction to \cite{MOV} and \cite{MOPV}.

Notice that \eqref{L2} is true whenever $T$ is a continuous
isomorphism of $L^2(\mathbb{R}^n)$ onto itself. Indeed a classical
estimate, which follows from Cotlar's inequality, states that
\begin{equation} \label{Is}
 \| T^{\star} f  \|_2 \leqslant  C\, \| f \|_2,   \quad f \in L^2(\mathbb{R}^n) \,,
\end{equation}
which combined with the assumption that $T$ is an isomorphism gives
\eqref{L2}. Thus \eqref{L2} is true for the Hilbert Transform and
for the Beurling Transform. The first non-trivial case is a scalar
Riesz transform in dimension $2$ or higher. Recall that the
\emph{j}-th Riesz transform is the Calder\'{o}n--Zygmund operator
with kernel
$$
\frac{x_j}{|x|^{n+1}}, \quad x \in \mathbb{R}^n \setminus \{0\},
\quad 1 \leqslant j \leqslant n .
$$

The first non trivial case for even operators is any second order
Riesz transform. For example, the second order Riesz transform with
kernel
$$
\frac{x_1 x_2}{|x|^{n+2}}, \quad x \in \mathbb{R}^n \setminus \{0\}.
$$

In Section \ref{Verdera:Sec:2} we prove the $L^2$ estimate
\eqref{L2} for the second order Riesz transform above and in Section
\ref{Verdera:Sec:4} for the $\emph{j}-th$ Riesz transform. Indeed,
in both cases we prove a stronger pointwise estimate which works for
all higher Riesz transforms. Recall that a higher Riesz transform is
a smooth homogeneous convolution singular integral operator with
kernel of the type
$$
\frac{P(x)}{|x|^{n+d}}, \quad x \in \mathbb{R}^n \setminus \{0\},
$$
where $P$ is a harmonic homogeneous polynomial of degree $d
\geqslant 1$. The mean value property of harmonic functions combined
with homogeneity yields the cancellation property \eqref{can}. One
has the following (\cite{MOV})
\begin{teo}\label{eteo}
If $T$ is an even higher Riesz transform, then
\begin{equation}\label{peven}
 T^{*}f(x) \leqslant C \, M(Tf)(x),\quad x \in \R^n\,, \quad f \in
 L^2(\R^n)\,,
\end{equation}
 where $M$ is the maximal Hardy-Littlewood operator.
\end{teo}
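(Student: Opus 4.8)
The plan is to realize the truncated operator $T^\epsilon$ as a "smoothed-out" version of $T$ acting on a mollified function, so that the difference $T^\epsilon f - (\text{average of } Tf)$ is controlled by a harmless error. Concretely, write $T^\epsilon f(x) = (K_\epsilon * f)(x)$ where $K_\epsilon = K\,\chi_{\{|y|>\epsilon\}}$. By homogeneity it suffices to treat $\epsilon = 1$ and then rescale: $T^\epsilon f(x) = (T^1 f_\epsilon)(x/\epsilon)$ with $f_\epsilon(y) = f(\epsilon y)$, and the Hardy–Littlewood maximal function is scale-invariant, so a bound at scale $1$ gives the bound at every scale with the same constant. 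Thus the whole theorem reduces to producing, for the single truncation $K^1 := K\chi_{\{|y|>1\}}$, a decomposition
\[
K^1 = b + K * \varphi,
\]
where $\varphi$ is a fixed compactly supported (or rapidly decaying) bump with $\int \varphi = 1$ (or a finite linear combination of such dilated bumps), and $b \in L^1(\R^n)$. Granting this, $T^1 f = b*f + (K*\varphi)*f = b*f + \varphi*(Tf)$; the term $\varphi*(Tf)$ is pointwise dominated by $\|\varphi\|_\infty$-type constants times $M(Tf)$ (or a sum of such, after the dilations), and one is left to absorb $b*f$.

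The key algebraic input — and this is where evenness and the higher Riesz structure enter — is that $\widehat{K^1}$ and $\widehat{K}$ should differ by a multiplier that is itself of the form $\widehat{\varphi}$ for a nice $\varphi$, up to an $L^1$ error. Recall $\widehat K$ is homogeneous of degree $0$ and, because $K$ is even with $\Omega$ smooth, $\widehat K$ is a smooth even function on $S^{n-1}$ extended homogeneously; for a higher Riesz transform of degree $d$, $\widehat K(\xi) = c\, P(\xi)/|\xi|^{d}$. The function $\widehat{K^1}$ equals $\widehat K$ times (Fourier transform of $\chi_{\{|y|>1\}}$ convolved in...) — more usefully, $K^1 - K*\varphi$ has Fourier transform $\widehat K \cdot(1 - \widehat\varphi) - (\text{the part of }K\text{ inside the unit ball})$, and one chooses $\varphi$ so that $\widehat\varphi(\xi) = 1 + O(|\xi|)$ near $0$ and decays at infinity, making $\widehat K(\xi)(1-\widehat\varphi(\xi))$ integrable-ish; the genuinely delicate point is that $K$ restricted to $|y|>1$ is not integrable at infinity, so "$b\in L^1$" cannot hold literally. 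The honest statement is that $b*f$ must instead be estimated directly: $b$ will decay like $|y|^{-n}$ at infinity with extra cancellation coming from \eqref{can}, so that $b*f$ is controlled by $M(Tf)$ as well — this requires expressing the tail of $K$ again through $K$ itself via a second, coarser decomposition, and iterating. I expect this iteration/tail analysis to be the main obstacle: one needs a clean identity expressing $K\chi_{\{|y|>1\}}$ as $K$ convolved with an approximate identity plus an error that, although not in $L^1$, is itself a Calderón–Zygmund-type kernel applied to which $M$ still does the job.

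A cleaner route, which I would actually pursue, avoids the $L^1$ issue by working on the Fourier side with the explicit rational form of the multiplier. Since for a higher Riesz transform $\widehat K(\xi) = \gamma_d\, P(\xi/|\xi|)$ is a smooth function on the sphere, homogeneous of degree $0$, the operator $T$ has a bounded inverse-in-direction structure: one shows that the multiplier $m_\epsilon(\xi) := \widehat{T^\epsilon}(\xi)$ satisfies $m_\epsilon(\xi) = \widehat K(\xi)\, \Phi(\epsilon|\xi|) + \Psi(\epsilon\xi)$ where $\Phi,\Psi$ come from integrating the homogeneous-degree-$0$ kernel against $\chi_{\{|y|>1\}}$ on spheres, and crucially $\Phi(0)=1$. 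Then $T^\epsilon f = \Phi(\epsilon D)\,(Tf) + \Psi(\epsilon D)f$ where $\Phi(\epsilon D)$ and $\Psi(\epsilon D)$ are convolutions with $L^1$-normalized approximate identities $\check\Phi_\epsilon$ — this is where smoothness of $\Omega$ and evenness guarantee enough decay of $\Psi$ so that $\Psi(\epsilon D)f$ is again, after re-expanding, absorbed into $M(Tf)$ using that $\Psi$ annihilates the obstruction via \eqref{can}. Taking the supremum over $\epsilon$ and using $|\,\check\Phi_\epsilon * g\,| \le C\, Mg$ for any fixed integrable radial-ish kernel yields $T^*f \le C\,M(Tf) + (\text{absorbed terms}) \le C\,M(Tf)$. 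The subtle step to get exactly right is the passage from "$T^\epsilon f$ in terms of $Tf$ plus a lower-order term in $f$" to eliminating the lower-order $f$-term entirely; this is precisely what fails for general even kernels and succeeds for higher Riesz transforms because $\widehat K$ never vanishes except at the zero set of the harmonic polynomial $P$, and I would handle the directions where $\widehat K$ is small by a separate, direct estimate exploiting that on those directions $T^\epsilon f$ itself is already small.
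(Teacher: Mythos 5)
Your proposal does not close, and the difficulty you keep flagging (``one is left to absorb $b*f$'', ``eliminating the lower-order $f$-term'') is not a loose end but a fatal obstruction to the decomposition as you set it up. Any identity of the form $K\chi_{\{|y|>1\}} = b + K*\varphi$ with a leftover $b$, however integrable or decaying, yields $T^1f = b*f + \varphi*(Tf)$, and the term $b*f$ depends on $f$, not on $Tf$; no cancellation in $b$ makes it $\le C\,M(Tf)$, because $f$ is not locally controlled by $Tf$. Your proposed remedy --- re-express the tail via $K$ and ``iterate'' --- is stated but never carried out, and your Fourier-side variant with the $\Psi(\epsilon D)f$ term has exactly the same defect; the suggestion to handle the directions where $\widehat K(\xi) = c\,P(\xi)/|\xi|^d$ is small by a ``separate, direct estimate'' is a hope, not a proof, and those directions (the zeros of $P$ on the sphere) are precisely where any multiplier inversion you would need breaks down.

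What you are missing is that for an even higher Riesz transform the truncated kernel factors through $T$ \emph{exactly}: there is a bounded function $b$, supported in the closed unit ball $B$, with
\[
\chi_{\R^n\setminus B}(x)\,K(x) = T(b)(x),
\]
that is, $K\chi_{\{|y|>1\}} = K*b$ with \emph{no} residual term at all. Once this is in hand,
$T^1f(0)=\int T(b)\,f = \int b\,Tf \le \|b\|_\infty\,|B|\cdot\frac{1}{|B|}\int_B|Tf| \le C\,M(Tf)(0)$,
and dilation/translation invariance gives \eqref{peven} (this is the chain around \eqref{trunc}). The exact identity is produced not by dividing on the Fourier side --- which, as you worried, is blocked by the zeros of $P$ --- but by a PDE construction: write $K=c\,P(\partial)E$ with $E$ the fundamental solution of the \emph{differential} operator $(-\Delta)^{d/2}$ (genuinely differential because $d$ is even), patch $\varphi=E$ outside $B$ to a radial polynomial in $|x|^2$ inside $B$ with coefficients chosen so $\varphi$ is of class $C^{d-1}$ across $\partial B$; then $b:=(-\Delta)^{d/2}\varphi$ is bounded and supported in $B$, while $P(\partial)\varphi$ reproduces $\chi_{\R^n\setminus B}K$ because the harmonic homogeneous operator $P(\partial)$ annihilates the radial polynomial patch. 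This is exactly Lemma~\ref{nuc} (written there for $d=2$, where $b$ is a constant multiple of $\chi_B$ and \eqref{mean} becomes a plain spherical mean; general even $d$ uses a weighted variant). The evenness is what makes $(-\Delta)^{d/2}$ local and hence $b$ compactly supported and bounded; for odd $d$ one gets a pseudodifferential operator, a $BMO$ function with slow decay, and only the $M^2$-bound of Theorem~\ref{oteo}.
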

Indeed, for a second order Riesz transform $S$ one has that the
truncation at level $\epsilon$ is a mean of $S(f)$ on a ball. More
precisely one has
\begin{equation}\label{mean}
S^{\epsilon}(f)(x)= \frac{1}{|B(x,\epsilon)|}\int_{B(x,\,\epsilon)}
S(f)(y)\,dy
\end{equation}
A weighted variant of the preceding identity works for a general
even higher Riesz transform. Of course, \eqref{L2} for even higher
Riesz transforms follows immediately from \eqref{peven}. It turns
out that, as we explain in Section \ref{Verdera:Sec:3},
\eqref{peven} does not hold for odd Riesz transforms, not even for
the Hilbert transform. But we can prove the following substitute
result (\cite{MOPV}), which obviously takes care of \eqref{L2} for
odd higher Riesz transforms.
\begin{teo}\label{oteo}
If $T$ is an odd higher Riesz transform, then
\begin{equation}\label{podd}
 T^{*}f(x) \leqslant C \, M^2(Tf)(x),\quad x \in \R^n\,, \quad f \in
 L^2(\R^n)\,,
\end{equation}
where $M^2=M \circ M$ is the iteration of the maximal Hardy-
Littlewood operator.
\end{teo}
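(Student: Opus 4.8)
The plan is to mimic the argument for even higher Riesz transforms (Theorem~\ref{eteo}): represent the truncation $T^{\epsilon}$ as one fixed convolution operator applied to $Tf$, and then read off the pointwise bound from the size of the corresponding kernel. By the dilation and translation invariance of $T^{\epsilon}$ and of $M$, it suffices to prove $|T^{1}f(0)|\le C\,M^{2}(Tf)(0)$; that is, we may fix $\epsilon=1$ and $x=0$ (the inequality being understood a.e., which is all that \eqref{L2} requires).

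\textit{The representation $T^{1}f=\kappa*Tf$.} Let $m=\widehat K=\gamma_{d}\,P(\xi)/|\xi|^{d}$ be the multiplier of $T$ and $m_{1}=\widehat{K\chi_{\{|y|>1\}}}$ that of $T^{1}$. A scaling argument gives $\widehat{T^{\epsilon}f}(\xi)=m_{1}(\epsilon\xi)\widehat f(\xi)$, and since $m$ is homogeneous of degree $0$ we get, at least formally, $\widehat{T^{1}f}=\rho\cdot\widehat{Tf}$ with $\rho:=m_{1}/m$. The key algebraic point — what makes the odd case work at all — is that $\rho$ is a \emph{bounded} function; in fact it depends only on $|\xi|$. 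To see this, write $m_{1}=m-v$ with $v(\xi)=\mathrm{p.v.}\!\int_{|y|<1}K(y)e^{-2\pi i\xi\cdot y}\,dy$; expanding the exponential and passing to polar coordinates, the $k$-th Taylor coefficient of $v$ is, up to a constant, $\int_{S^{n-1}}P(\theta)(\xi\cdot\theta)^{k}\,d\sigma(\theta)$, which vanishes unless $k\ge d$ and $k\equiv d\pmod 2$, and for those $k$ equals a constant times $|\xi|^{k-d}P(\xi)$ by the Funk--Hecke formula (the reproducing property of zonal harmonics). Hence $v(\xi)=P(\xi)\,g(|\xi|^{2})$ for an entire $g$, so $m_{1}(\xi)=P(\xi)\big(\gamma_{d}|\xi|^{-d}-g(|\xi|^{2})\big)$ and $\rho(\xi)=1-\gamma_{d}^{-1}|\xi|^{d}g(|\xi|^{2})$, a radial function (for the Hilbert transform, $\rho(\xi)=\tfrac{2}{\pi}\int_{2\pi|\xi|}^{\infty}\frac{\sin s}{s}\,ds$). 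Thus the inverse Fourier transform $\kappa$ of $\rho$ lies in $L^{1}(\R^{n})$ and $T^{1}f=\kappa*Tf$. Note the contrast with the even case, where the analogous kernel is a \emph{nonnegative} (weighted) average, so that $|T^{\epsilon}f|\le CM(Tf)$; here $\kappa$ changes sign and this fails.

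\textit{Structure of $\kappa$.} The function $\rho$ is smooth away from the origin; it has a conormal singularity of order $d$ at $0$ (because $|\xi|^{d}$ is not smooth there, $d$ being odd), and, the truncation $\chi_{\{|y|>1\}}$ jumping across the unit sphere, at infinity $\rho$ behaves like an oscillatory symbol of order $-1$, $\rho(\xi)\sim c\,\cos(2\pi|\xi|)\,|\xi|^{-1}$. Correspondingly $\kappa=\kappa_{0}+\kappa_{1}$, where $\kappa_{0}$ (produced by the singularity of $\rho$ at $0$) decays like $|t|^{-n-d}$ and is dominated by a radially decreasing $L^{1}$ function, while $\kappa_{1}$ (produced by the tail of $\rho$) is an $L^{1}$ function whose only singularity sits on the sphere $\{|t|=1\}$ and is of the integrable type $\bigl|\,|t|-1\,\bigr|^{-\beta}$ with $\beta<1$ — logarithmic for the Hilbert transform, where one computes $\kappa(t)=\frac{1}{\pi^{2}t}\log\bigl|\frac{t+1}{t-1}\bigr|$. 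The term $\kappa_{0}$ is harmless: $|\kappa_{0}*Tf(0)|\le C\,M(Tf)(0)$ by the classical maximal estimate for convolution with a radially decreasing kernel.

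\textit{The main step: $|\kappa_{1}*Tf(0)|\le C\,M^{2}(Tf)(0)$.} This is where the \emph{second} maximal operator genuinely enters, and it is the main obstacle, since $|\kappa_{1}|$ has no radially decreasing majorant and the crude bound $|\kappa_{1}*Tf|\le|\kappa_{1}|*|Tf|$ is far too lossy. One decomposes dyadically around the unit sphere, $\kappa_{1}=\sum_{j\ge0}\kappa_{1,j}$ with $\kappa_{1,j}$ supported in the annulus $A_{j}=\bigl\{\bigl|\,|t|-1\,\bigr|\sim2^{-j}\bigr\}$ and $\|\kappa_{1,j}\|_{\infty}\lesssim2^{j\beta}$. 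Covering $A_{j}$ by $\sim2^{j(n-1)}$ balls of radius $2^{-j}$, the average of $Tf$ over each such ball is controlled by $M(Tf)$ at its centre; the resulting angular average of $M(Tf)$ over $\sim2^{j(n-1)}$ points on a sphere of radius $\approx1$ is then absorbed, after summing the geometric series in $j$ (convergent precisely because $\beta<1$), by a \emph{second} application of $M$. Writing the radial singularity as a superposition of normalized characteristic functions of thin annuli, and expressing spherical averages of $Tf$ as radial derivatives of solid averages, is what makes the two maximal operators surface cleanly; the upshot is $|\kappa_{1}*Tf(0)|\le C\,M(M(Tf))(0)$. Since $M(Tf)(0)\le M^{2}(Tf)(0)$ pointwise, combining with the previous step yields $|T^{1}f(0)|\le C\,M^{2}(Tf)(0)$, hence $T^{*}f\le C\,M^{2}(Tf)$; inequality \eqref{L2} for odd higher Riesz transforms follows from the $L^{2}$ boundedness of $M$ applied twice.
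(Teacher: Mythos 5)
Your first two paragraphs are essentially correct and, modulo the language (multipliers vs.\ fundamental solutions), recover the paper's starting point: the kernel $\kappa$ with $T^{1}f=\kappa*Tf$ is the function the paper calls $b$, built as $(-\Delta)^{1/2}\varphi$ with $\varphi$ gluing the fundamental solution $E(x)=c_n|x|^{1-n}$ of $(-\Delta)^{1/2}$ outside $B$ to the constant $c_n$ inside; your $\kappa_0$ corresponds to the paper's tail estimate $|b(x)|\le C|x|^{-n-1}$ for $|x|\ge 2$ (the term $I_3$). The gap is in the ``main step.'' Two things go wrong there.

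First, the size hypothesis on $\kappa_1$ is too weak. The bound you posit, $|\kappa_1(t)|\lesssim\bigl|\,|t|-1\,\bigr|^{-\beta}$ with $\beta\in(0,1)$, is not sufficient for the conclusion $|\kappa_1*Tf(0)|\lesssim M^2(Tf)(0)$: take $Tf=\delta^{-n/2}\chi_{B_\delta(p)}$ with $|p|=1$; then $\int_{2B}|\kappa_1|\,|Tf|\gtrsim\delta^{n/2-\beta}$ while $M^2(Tf)(0)\approx\delta^{n/2}\log(1/\delta)$, and the ratio blows up as $\delta\to0$. So with only a power bound the asserted inequality is actually \emph{false}, even before one worries about the covering argument. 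What saves the day is that $\kappa$ (the paper's $b$) blows up only like $\log\bigl(1/\bigl|1-|x|\bigr|\bigr)$ near $\partial B$ and, more to the point, lies in $\operatorname{BMO}$ (Lemma~\ref{onuc}); you note this for the Hilbert transform but then discard it. Second, the passage from ``average of $M(Tf)$ over $\sim 2^{j(n-1)}$ points on a sphere'' to $M^2(Tf)(0)$ has no justification: a codimension-one (spherical) average of $M(Tf)$ is not pointwise controlled by a solid average $M^2(Tf)$ at a nearby point, and the dyadic sum does not rescue this. The paper's actual argument for the singular piece $I_1=\int_{2B}(b-b_{2B})\,Tf$ is of a different nature: John--Nirenberg gives exponential integrability of $b-b_{2B}$ on $2B$, H\"older's inequality with the dual Young pair $e^t-1$ and $t+t\log^+t$ gives $|I_1|\le C\|b\|_{\operatorname{BMO}}\,\|Tf\|_{L\log L(2B)}$, and P\'erez's identity $M_{L(\log L)}\approx M^2$ converts this into $C\,M^2(Tf)(0)$. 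That $\operatorname{BMO}$/$\exp L$ vs.\ $L\log L$ duality, rather than any covering of thin annuli, is the mechanism that produces the second maximal operator and is the missing ingredient in your proposal.
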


Without any harmonicity assumption the $L^2$ estimate \eqref{L2}
does not hold. The simplest example involves the Beurling transform
$B$, which is the singular integral operator in the plane with
complex valued kernel
$$
- \frac{1}{\pi}\frac{1}{z^2}= - \frac{1}{\pi}
\frac{\overline{z}^2}{|z|^4} = - \frac{1}{\pi} \frac{x^2-y^2}{|z|^4}
+ \text{i}  \frac{1}{\pi} \frac{2xy}{|z|^4}.
$$
The Fourier transform of the tempered distribution $p.v.(-
\frac{1}{\pi}\frac{1}{z^2})$ is the function
$\frac{\overline{\xi}}{\xi},$ so that $B$ is an isometry of
$L^2(\R^2)$ onto itself. It turns out that the singular integral
$$
T= B+B^2 = B(I + B)
$$
does not satisfy the $L^2$ control \eqref{L2}. The reason for that,
as we will see later on in this Section, is that the operator $I+B$
is not invertible in $L^2(\R^2).$

One way to explain the difference between the even and odd cases is
as follows. Theorem \ref{eteo} concerns an even higher Riesz
transform determined by a harmonic homogeneous polynomial of degree,
say,  $d$. In its proof one is lead to consider the operator
$(-\bigtriangleup)^{d/2}$, which is a differential operator.
Instead, in Theorem \ref{oteo},  $d$ is odd and thus
$(-\bigtriangleup)^{d/2}$ is only a pseudo-differential operator.
The effect of this is that in the odd case certain functions are not
compactly supported and are not bounded. Nevertheless, they still
satisfy a $BMO$ condition, which is the key fact in obtaining the
second iteration of the maximal operator.

The search for a description of those singular integrals $T$ of a
given parity for which \eqref{L2} holds begun just after \cite{MV}
was published. The final answer was given in \cite{MOV} and
\cite{MOPV}. To state the result denote by $A$ the
Calder\'{o}n--Zygmund algebra consisting of the operators of the
form $\lambda I + T$, where $T$ is a smooth homogeneous convolution
singular integral operator and $\lambda$ a real number.

\begin{teo}\label{MOV}
Let $T$ be an even smooth homogeneous convolution singular integral
operator with kernel  $\Omega(x)/|x|^n$. Then the following are
equivalent.
\begin{enumerate}
\item [(i)]
$$T^* f(x)\leqslant C\, M(Tf)(x), \quad x\in \R^n , \quad f \in L^2(\R^n),$$
where $M$ is the Hardy-Littlewood maximal operator.
\item [(ii)]
$$\int |T^*f |^2 \leqslant C \int |Tf|^2, \quad f \in L^2(\R^n).$$

\item [(iii)]
If the spherical harmonics expansion of $\Omega$ is
$$\Omega(x) =P_{2}(x)+P_{4}(x)+\dotsb, \quad |x|=1,$$
then there exist an even harmonic homogeneous polynomial $P$ of
degree $d$, such that $P$ divides $P_{2j}$ (in the ring of all
polynomials in $n$ variables with real coeficients) for all~$j$,
$T=R_{P}\circ U$, where $R_P$ is the higher Riesz transform with
kernel $P(x)/|x|^{n+d}$, and $U$ is an invertible operator in the
Calder\'{o}n--Zygmund algebra $A$.
\end{enumerate}
\end{teo}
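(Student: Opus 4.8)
The plan is to prove the chain of implications (iii) $\Rightarrow$ (i) $\Rightarrow$ (ii) $\Rightarrow$ (iii), since (i) $\Rightarrow$ (ii) is the easy direction. For (i) $\Rightarrow$ (ii), apply the $L^2$-boundedness of the Hardy--Littlewood maximal operator directly to the pointwise estimate in (i): $\|T^*f\|_2 \le C\|M(Tf)\|_2 \le C'\|Tf\|_2$. For (iii) $\Rightarrow$ (i), we are given that $T = R_P \circ U$ with $U = \lambda I + S \in A$ invertible. Theorem \ref{eteo} gives $R_P^* g \le C\, M(R_P g)$ pointwise for every $g$. The idea is to exploit the fact that the truncations $T^\epsilon$ and $(R_P)^\epsilon \circ U$ differ only by an error coming from the commutator of truncation with $U$; because $U$ lies in the Calder\'on--Zygmund algebra, this error term is controlled pointwise by $C\,M f$, and then $Mf$ itself is controlled by $M(Tf)$ after composing with the inverse $U^{-1}\circ R_P^{-1}$ (here one uses that $R_P$ is invertible on $L^2$, its inverse being essentially $(-\triangle)^{-d/2}$ composed with $T$, realized as a nice operator because $P$ divides every $P_{2j}$). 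Assembling these pieces yields $T^*f(x) = \sup_\epsilon |(R_P)^\epsilon(Uf)(x)| \le C\, M(R_P(Uf))(x) + C\,Mf(x) \le C\, M(Tf)(x)$.

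The substantive direction is (ii) $\Rightarrow$ (iii). Start from the expansion $\Omega = P_2 + P_4 + \cdots$ into spherical harmonics (only even degrees appear since $T$ is even; the degree-zero term vanishes by the cancellation property \eqref{can}). On the Fourier side, the multiplier of $T$ is $\widehat{K}(\xi) = \sum_j \gamma_{2j}\, P_{2j}(\xi)/|\xi|^{2j}$ for suitable constants $\gamma_{2j}$ (Bochner's formula relating a spherical harmonic kernel to its transform). The claim is that \eqref{L2} forces a strong rigidity: there must be a single even harmonic homogeneous polynomial $P$ of degree $d$ dividing all the $P_{2j}$. The mechanism is a \emph{reduction to a one-dimensional / radial analysis along rays where $P$ vanishes}. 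Suppose $P$ does \emph{not} divide some $P_{2j}$; then one can find a direction, or a small spherical cap, where the common structure breaks, and build a sequence of test functions $f_k$ (roughly, bumps modulated to concentrate the Fourier mass of $Tf_k$ near a zero set of the putative common factor while keeping $T^*f_k$ large) showing $\|T^*f_k\|_2 / \|Tf_k\|_2 \to \infty$. Conversely, once divisibility holds, one writes $P_{2j} = P \cdot Q_j$ and checks that $\widehat{K}(\xi) = \bigl(P(\xi)/|\xi|^d\bigr)\cdot m(\xi)$ where $m$ is the multiplier of an operator in $A$; invertibility of that operator is exactly the requirement that $m$ be bounded away from $0$, which is forced by \eqref{L2} because if $m$ vanished somewhere one could again concentrate $Tf$ near that zero to violate the estimate (this is the Beurling $B + B^2$ phenomenon mentioned in the text, where $I+B$ fails to be invertible).

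The hard part is this last paragraph: turning the qualitative failure of divisibility or invertibility into a quantitative counterexample to \eqref{L2}. The delicate point is that $Tf$ being small in $L^2$ does not by itself say anything about $T^*f$; one needs test functions for which the \emph{truncations} $T^\epsilon f$ at some scale $\epsilon$ remain of order one in $L^2$ while the full principal value $Tf$ is made small --- this is possible precisely when the multiplier degenerates, because then $\widehat{K}$ is small on a region while the truncated multipliers $\widehat{K^\epsilon}$ (which differ from $\widehat{K}$ by oscillatory terms coming from the indicator of $\{|y|>\epsilon\}$) are not uniformly small there. Making this rigorous requires a careful stationary-phase or asymptotic analysis of $\widehat{K^\epsilon}(\xi) - \widehat{K}(\xi)$ as $|\xi|\epsilon \to \infty$, isolating the leading term and showing it is governed by the restriction of $\Omega$ (equivalently of $P_{2j}$'s) to the sphere in a way that detects the missing common factor $P$. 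In the expository sections that follow, this is exactly why the second-order Riesz transform (Section \ref{Verdera:Sec:2}) and the scalar Riesz transforms (Section \ref{Verdera:Sec:4}) are treated first: there $P$ itself is the kernel polynomial, divisibility is automatic, and one sees the clean identity \eqref{mean} instead of the full perturbative machinery.
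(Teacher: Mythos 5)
Your plan (iii) $\Rightarrow$ (i) $\Rightarrow$ (ii) $\Rightarrow$ (iii) is the right one, and (i) $\Rightarrow$ (ii) is indeed immediate from the $L^2$ boundedness of $M$. But two of your steps contain substantive errors, not merely omitted details.

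For (iii) $\Rightarrow$ (i), your argument hinges on the claims that ``$Mf$ is controlled by $M(Tf)$ after composing with $U^{-1}\circ R_P^{-1}$'' and that ``$R_P$ is invertible on $L^2$.'' Both are false. The multiplier of $R_P$ is a constant times $P(\xi)/|\xi|^d$, which vanishes on the zero set of $P$; thus $R_P$ is injective but not onto $L^2$ and has no bounded inverse --- this is stated explicitly in Section~\ref{Verdera:Sec:2} (``$T$ is injective but not onto''). More structurally, if $Mf(x)\le C\,M(Tf)(x)$ held pointwise then Cotlar's inequality \eqref{MS} would already give (i) for \emph{every} smooth homogeneous kernel, contradicting the theorem itself (the operator $B+B^2$ discussed in the introduction is a counterexample to (ii), hence to (i)). So any route that controls the truncation--commutator error by $Mf$ and then tries to absorb $Mf$ into $M(Tf)$ is doomed. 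The mechanism the paper illustrates in Sections~\ref{Verdera:Sec:2} and~\ref{Verdera:Sec:6} is different: one realizes the truncated kernel as $T(\gamma)$ for a function $\gamma$ bounded near the origin and decaying like $|x|^{-n-1}$ at infinity, so that $T^1f(0)=\int\gamma\,Tf$ and $Mf$ never appears; constructing such a $\gamma$ is precisely where the division hypothesis in (iii) is used, to solve $P(\partial)\psi = $ error with $\psi$ compactly supported, after which $U^{-1}$ is applied to a nice compactly supported function, not to $f$.

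For (ii) $\Rightarrow$ (iii), your ``concentrate Fourier mass / stationary phase'' sketch is too loose to be assessed as a proof and is not the mechanism that works. The argument in Section~\ref{Verdera:Sec:5} is cleaner and essentially algebraic: the same fundamental-solution construction produces a bounded $b$ supported on $B$ with $T(b)=\chi_{\R^n\setminus B}K+R$, where $R$ is an explicit compactly supported error (in the model case $R=c\,xy\,\chi_B$). Convolving this identity with an arbitrary $f\in L^2$ and applying (ii) once gives $\|R*f\|_2\le C\|Tf\|_2$ for all $f$, which passes to a pointwise domination of Fourier multipliers $|\widehat{R}(\xi)|\le C\,|\widehat{p.v.K}(\xi)|$. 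Restricting to $|\xi|=1$ turns this into a polynomial inequality; since the dominating polynomial has zero mean on the sphere it must vanish somewhere there, and chasing the zeros forces the divisibility and the non-vanishing of the multiplier of $U$. No asymptotic analysis of $\widehat{K^\epsilon}-\widehat{K}$ is needed, and the test-function concentration you propose does not appear. (Note also that the paper only proves particular instances of Theorem~\ref{MOV} and refers to \cite{MOV} for the full argument, which additionally requires a truncation-and-compactness step when $\Omega$ has infinitely many spherical harmonics, as described in Section~\ref{Verdera:Sec:7}.)
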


Several remarks are in order. First, it is surprising that the $L^2$
control we are looking for, that is, condition (ii) above, is
equivalent to the apparently much stronger pointwise inequality (i).
We do not know any proof of this fact which does not go through the
structural condition (iii). Second, condition (iii) on the
 spherical harmonics expansion of $\Omega$ is purely algebraic and
 easy to check in practice on the Fourier transform side.
 Observe that if condition (iii) is satisfied, then the polynomial $P$
  must be a scalar multiple of the first
 non-zero spherical harmonic $P_{2j}$ in the expansion of $\Omega.$
 We illustrate this with an example.

 \pagebreak

\begin{example}
\end{example}
%\label{T1}
%\text{\textbf{Example 1.}}
Let $P(x,y) = -\frac{1}{\pi} xy$ and
denote by $R_P$ the second order Riesz transform in the plane
associated with the harmonic homogeneous polynomial $P$. Its kernel
is
\begin{equation}\label{second}
-\frac{1}{\pi} \frac{xy}{|z\,|^4}, \quad z=x+i y \in
\mathbb{C}\setminus\{0\}.
\end{equation}
%\end{example}
According to a well known formula \cite[p.73]{St} the Fourier
transform of the principal value distribution associated with this
kernel is
$$
\frac{ uv}{|\xi\,|^2}, \quad \xi =u+i v \in
\mathbb{C}\setminus\{0\}.
$$
This is also the symbol (or Fourier multiplier) of $R_P$, in the
sense that
$$
\widehat{R_P(f)}(\xi) = \frac{uv}{|\xi\,|^2} \,\hat{f}(\xi), \quad
\xi \neq 0, \quad f \in L^2(\R^n).
$$

 Similarly, the Fourier multiplier of the
fourth order Riesz transform with kernel
$$
\frac{2}{\pi} \frac{x^3 y- x y^3}{|z\,|^6}, \quad z \neq 0,
$$
is
$$
\frac{u^3 v- u v^3}{|\xi\,|^4}, \quad \xi \neq 0.
$$
Given a real number $\lambda$ let $T$ be the singular integral with
kernel
$$
-\frac{1}{\pi} \frac{2xy}{|z\,|^4}+ \lambda \, \frac{2}{\pi}
\frac{x^3 y- x y^3}{|z\,|^6}.
$$
Its symbol is
$$
\frac{uv}{|\xi\,|^2}\left( 1+\lambda \frac{u^2-v^2}{|\xi|^2}\right).
$$
 We clearly have
$$
T= R_P \circ U,
$$
$U$ being the bounded operator on $L^2(\R^n)$ with symbol $1+\lambda
\frac{u^2-v^2}{|\xi|^2}.$ Notice that the multiplier $1+\lambda
\frac{u^2-v^2}{|\xi|^2}$ vanishes at some point of the unit sphere
if and only if $|\lambda| \geqslant 1.$ Therefore condition (iii) of
Theorem \ref{MOV} is satisfied if and only if $|\lambda| < 1.$  For
instance, taking $\lambda =1$ one gets an operator for which neither
the $L^2$ estimate (ii) nor the pointwise inequality (i) hold.

To grasp the subtlety of the division condition in (iii) it is
instructive to consider the special case of the plane. The function
$\Omega$, which is real,  has a Fourier series expansion
\begin{equation*}
\begin{split} \Omega(e^{i\theta}) & =\sum_{n=-\infty}^{\infty}
c_n\,e^{i n \theta} = \sum_{n=1}^{\infty} c_n\,e^{i n \theta}+
\overline{c_n}\,e^{-i n \theta}\\*[5pt] & = \sum_{n=1}^{\infty} 2\,
\text{Re}( c_n\,e^{i n \theta})
\end{split}
\end{equation*}
The expression $2\, \text{Re}( c_n\,e^{i\, n \theta})$ is the
general form of the restriction to the unit circle of a harmonic
homogeneous polynomial of degree $n$ on the plane.  There are
exactly $2n$ zeroes of $2\, \text{Re}( c_n\,e^{i\, n \theta})$ on
the circle, which are uniformly distributed. They are the $2 n$-th
roots of unity if and only if $c_n$ is purely imaginary.

Since $\Omega$ is even, only the Fourier coefficients with even
index may be non-zero and so
$$
\Omega(e^{i\theta})= \sum_{n=1}^{\infty} 2\, \text{Re}(
c_{2\,n}\,e^{i\, 2\,n \theta}).
$$
Replacing $\theta$ by $\theta+\alpha$ we obtain
$$
\Omega(e^{i (\theta +\alpha)})= \sum_{n=N}^{\infty} 2\, \text{Re}(
c_{2\,n}\,e^{i \, 2\,n \alpha} \,e^{i \,2\,n \theta}),
$$
where $c_{2\,N} \neq  0.$ Take $\alpha$ so that $c_{2\,N}\,e^{i\,2
\,N\,\alpha}$ is purely imaginary. Set $\gamma_{2\,n}= c_{2\,n}\,
e^{i\,2 \,n\,\alpha}.$ Then
$$
\Omega(e^{i (\theta +\alpha)})= \sum_{n=N}^{\infty} 2\, \text{Re}(
\gamma_{2\,n}\,e^{i \,2\,n \theta}).
$$
If $\text{Re}( \gamma_{2\,N}\,e^{i \,2\,N \theta})$ divides
$\text{Re}( \gamma_{2\,n}\,e^{i \,2\,n \theta})$, then , for some
positive integer $k$,
$$
k \frac{\pi}{4\,n}= \frac{\pi}{4\,N},
$$
or $n=k\,N.$ This means that only the Fourier coefficients with
index a multiple of $2\,N$ may be non-zero :
$$
\Omega(e^{i (\theta +\alpha)})=  \sum_{p=1}^{\infty} 2\, \text{Re}(
\gamma_{2\,N\,p}\;e^{i \,2\,N\,p\, \theta}).
$$
Moreover $\gamma_{2Np}$ must be purely imaginary, that is,
$\gamma_{2\,N\,p} = r_{2\,N\,p}\,i$, with $r_{2\,N\,p}$ real.
Replacing $\theta+\alpha$ by $\theta$ we get
\begin{equation*}
\begin{split}
\Omega(e^{i\theta})& = \sum_{p=1}^{\infty} 2\, \text{Re}(
r_{2Np}\,i\,\,e^{-i 2Np\alpha } \,e^{i 2Np \theta}),\\*[5pt] & =
\sum_{p=1}^{\infty}   r_{2Np}\,i\,\,e^{-i 2Np\alpha} \,e^{i 2Np
\theta}- r_{2Np}\,i\,\,e^{i 2Np\alpha} \,e^{-i 2Np \theta}.
\end{split}
\end{equation*}
As it is well-known the sequence of the $r_{2Np}, p=1,2, \dots$ is
rapidly decreasing, because $\Omega(e^{i\theta})$ is infinitely
differentiable. Therefore the division property in condition (iii)
of Theorem \ref{eteo} can be reformulated as a statement about the
 arguments and the support of the Fourier coefficients
 of $\Omega(e^{i\theta}).$

For odd operators the statement of Theorem \ref{MOV} must be
slightly modified (\cite{MOPV}).

\pagebreak
\begin{teo}\label{MOPV}
Let $T$ be an odd smooth homogeneous convolution singular integral
operator with kernel  $\Omega(x)/|x|^n$. Then the following are
equivalent.
\begin{enumerate}
\item [(i)]
$$T^* f(x)\leqslant C\, M^2 (Tf)(x), \quad x\in \R^n , \quad f \in L^2(\R^n),$$
$M^2 = M \circ M$ being the iterated Hardy-Littlewood maximal
operator.
\item [(ii)]
$$\int |T^*f |^2 \leqslant C \int |Tf|^2, \quad f \in L^2(\R^n).$$

\item [(iii)]
If the spherical harmonics expansion of $\Omega$ is
$$\Omega(x) =P_{1}(x)+P_{3}(x)+\dotsb, \quad |x|=1,$$
then there exist an odd harmonic homogeneous polynomial $P$ of
degree $d$, such that $P$ divides $P_{2j+1}$ (in the ring of all
polynomials in $n$ variables with real coeficients) for all~$j$,
$T=R_{P}\circ U$, where $R_P$ is the higher Riesz transform with
kernel $P(x)/|x|^{n+d}$, and $U$ is an invertible operator in the
Calder\'{o}n--Zygmund algebra $A$.
\end{enumerate}
\end{teo}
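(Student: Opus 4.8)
The plan is to prove the cyclic chain (iii)~$\Rightarrow$~(i)~$\Rightarrow$~(ii)~$\Rightarrow$~(iii), using Theorem~\ref{oteo} (the case $U=I$) as the model for the forward implication and Theorem~\ref{MOV} as the model for the converse. The implication (i)~$\Rightarrow$~(ii) is immediate: the Hardy--Littlewood maximal operator is bounded on $L^2(\R^n)$, so $\|T^*f\|_2\le C\|M^2(Tf)\|_2\le C'\|Tf\|_2$.

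For (iii)~$\Rightarrow$~(i) write $T=R_P\circ U$ with $U$ invertible in $A$; its multiplier $u(\xi)$ is homogeneous of degree $0$, smooth on $S^{n-1}$ and, by invertibility, nowhere vanishing there, and the multiplier of $T$ is (a constant times) $\frac{P(\xi)}{|\xi|^d}\,u(\xi)$. The crux is a kernel identity expressing the truncations \emph{directly} through $Tf$,
\[
T^\epsilon f=\Phi_\epsilon*(Tf),\qquad \widehat{\Phi_\epsilon}(\xi)=\Psi(\epsilon\xi),
\]
for a fixed profile $\Phi$, and both clauses of (iii) enter its proof. On the Fourier side the multiplier of $T^\epsilon$ is $m_T(\xi)-\Theta(\epsilon\xi)$, where $\Theta(\eta)=\int_{|z|<1}\frac{\Omega(z)}{|z|^n}(e^{-iz\eta}-1)\,dz$ (the subtracted $1$ integrates to zero by \eqref{can}). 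Expanding $\Theta$ into homogeneous components and using that $P$, a scalar multiple of the first nonzero spherical harmonic $P_{2N+1}$, \emph{divides} every $P_{2j+1}$, one checks that each component is divisible by $P$, so $\Theta(\eta)=P(\eta)V(\eta)$ with $V$ entire and the truncated multiplier retains the factor $\frac{P(\xi)}{|\xi|^d}$. Dividing by $m_T(\xi)$ this factor cancels, leaving $\widehat{\Phi_\epsilon}(\xi)=\Psi(\epsilon\xi)$ with $\Psi(\eta)=1-\frac{|\eta|^d\,V(\eta)}{c\,u(\eta)}$; the \emph{invertibility} of $U$ is exactly what keeps $u$ off zero, so $\Psi\in C^\infty(\R^n\setminus\{0\})$. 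Reading off the structure of $\Phi$ from that of $\Psi$: since $d$ is odd the $|\eta|^d$ term at the origin is not a polynomial, so $\Phi$ is not compactly supported and has a tail of size $|y|^{-n-d}$, while the decay of $\Psi$ at infinity gives $\Phi$ a singularity across $|y|=1$ which in the odd case is unbounded but of $BMO$ type. Splitting $\Phi$ into a part with an integrable radial decreasing majorant --- contributing $M(Tf)$ --- and the $BMO$ part, the argument behind Theorem~\ref{oteo}, namely that convolution of an $L^2$ function against a profile of this kind is dominated pointwise by $C\,M^2$ of it, applies and yields $T^*f\le C\,M^2(Tf)$.

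For the converse (ii)~$\Rightarrow$~(iii), assume $\|T^*f\|_2\le C\|Tf\|_2$ and let $P_{2N+1}$ be the first nonzero term of the spherical harmonics expansion of $\Omega$. One must produce a harmonic homogeneous $P$ of degree $d=2N+1$ with $P\mid P_{2j+1}$ for all $j$ and show $m_T(\xi)=\frac{P(\xi)}{|\xi|^d}\,u(\xi)$ with $u$ smooth and nowhere vanishing on $S^{n-1}$, i.e. $T=R_P\circ U$ with $U$ invertible. The argument is contrapositive and quantitative: if the structure fails --- either $m_T$ is not divisible by $P_{2N+1}(\xi)/|\xi|^{2N+1}$, or the cofactor $u$ vanishes at some $\xi_0\in S^{n-1}$ with $P_{2N+1}(\xi_0)\ne0$, the odd analogue of the situation in the Example, where the multiplier of $U$ vanishes on the sphere for $|\lambda|\ge1$ --- one constructs functions $f_k$ whose Fourier transforms concentrate, at high frequency and small spatial scale, near such a $\xi_0$, so that $\|Tf_k\|_2\to0$ because $m_T$ is small on the frequency support, whereas $\|T^*f_k\|_2$ stays bounded below: a well-chosen truncation $T^{\epsilon_k}f_k$ is bounded from below by the ``edge'' contribution of the kernel, which sees $\Omega$ itself and not $m_T$. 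This contradicts (ii) and forces the divisibility together with the non-vanishing of $u$, hence (iii).

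The main obstacle is the converse (ii)~$\Rightarrow$~(iii). Once the cancellation of the $\frac{P(\xi)}{|\xi|^d}$ factor in the truncated multiplier is observed, (iii)~$\Rightarrow$~(i) is essentially bookkeeping on top of Theorem~\ref{oteo} and (i)~$\Rightarrow$~(ii) is trivial; but (ii)~$\Rightarrow$~(iii) requires extracting rigid algebraic data --- exact divisibility and non-vanishing of a multiplier on the sphere --- from a single $L^2$ inequality, through a frequency-localisation argument that must stay valid despite the non-compact support and unboundedness of the relevant profiles in the odd case. This is why the even argument of \cite{MOV} cannot be transcribed verbatim, and it is where essentially all the work of \cite{MOPV} lies. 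A secondary point, already needed for (iii)~$\Rightarrow$~(i), is the estimate that convolution against the odd profile $\Phi$ is controlled pointwise by $M^2$ rather than by $M$: it is precisely the $BMO$ singularity of $\Phi$ on the unit sphere --- the counterpart, for $d$ odd, of the bounded jump that a ball indicator has across its boundary in the even case --- that forces the second iteration of the maximal operator, and hence accounts for $M^2$ in (i) against the single $M$ of the even Theorem~\ref{MOV}.
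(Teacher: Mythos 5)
Two preliminary observations. First, the paper does not actually prove Theorem~\ref{MOPV}: it states it, refers to \cite{MOPV}, and supplies only (a) a proof of the sufficiency (iii)~$\Rightarrow$~(i) in the special case $U=I$ and $P$ of degree~$1$ (Section~\ref{Verdera:Sec:4}, Theorem~\ref{oteo}); (b) a proof of both directions in a stripped-down \emph{even} model (Sections~\ref{Verdera:Sec:5}--\ref{Verdera:Sec:6}); and (c) very terse comments on the general mechanism in Section~\ref{Verdera:Sec:7}. So the comparison can only be against the method the paper describes, not against a complete proof. Second, your overall skeleton --- the cycle (iii)~$\Rightarrow$~(i)~$\Rightarrow$~(ii)~$\Rightarrow$~(iii), with (i)~$\Rightarrow$~(ii) trivial from $L^2$-boundedness of $M$ --- is exactly the paper's, and your account of why the odd case yields $M^2$ rather than $M$ (the auxiliary profile is not compactly supported, blows up logarithmically across $S^{n-1}$, but is in BMO, so $L\log L$-duality and \eqref{iteration} kick in) is faithful to Section~\ref{Verdera:Sec:4}.

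Your (iii)~$\Rightarrow$~(i) is a Fourier-side repackaging of what the paper does on the spatial side. The paper never writes $T^\epsilon f=\Phi_\epsilon*(Tf)$ with $\widehat{\Phi_\epsilon}(\xi)=\Psi(\epsilon\xi)$; instead it constructs explicit spatial kernels $b$ and $\beta$ (via a matched interior/exterior ansatz for a fundamental solution of $(-\Delta)^{d/2}$ or of a related differential operator) so that $\chi_{\R^n\setminus B}K = T(b-\beta)$, and then dualizes the truncation. Your multiplier $\Psi(\eta)=1-|\eta|^d V(\eta)/(c\,u(\eta))$ is precisely the Fourier transform of this $b-\beta$. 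The identification is correct in spirit, but one step of your sketch is glossed: that the Taylor--spherical-harmonic expansion of $\Theta(\eta)=\int_{|z|<1}\Omega(z)|z|^{-n}(e^{-iz\cdot\eta}-1)\,dz$ inherits divisibility by $P$ from $P\mid P_{2j+1}$ is not automatic, and the paper handles this by an explicit PDE construction (solving $P(\partial)\psi=$ corrector) rather than by term-by-term Taylor bookkeeping; the divisibility is what makes $P(\partial)^{-1}$ of the corrector a genuine function.

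For (ii)~$\Rightarrow$~(iii) you propose something genuinely different from the paper, and here there is a real gap. The paper's route is direct: from the truncation identity it extracts an operator inequality $\|W f\|_2\le C\|Tf\|_2$ for an auxiliary convolution $W$ whose multiplier is explicit (a Bessel-type function, $\widehat{\beta_0}$), hence a \emph{pointwise} multiplier bound $|\widehat{\beta_0}(\xi)|\le C|m_T(\xi)|$ on all of $\R^n$; it then reads the algebraic structure off this single inequality on the sphere, using the vanishing of zero-mean spherical polynomials and, as Section~\ref{Verdera:Sec:7} emphasises, a passage to the complex Nullstellensatz followed by a Hausdorff-dimension estimate of the real zero set to return to the reals. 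Your contrapositive via wave packets $f_k$ concentrating at a zero $\xi_0$ of $m_T$ can at best certify that $m_T$ vanishes at certain real points; it cannot by itself deliver divisibility of one \emph{real} polynomial by another, which is exactly the subtlety the paper flags (real Nullstellensatz fails, and multiplicities and a dimension count are needed). Your phrasing of the negation of (iii) is also off: (iii) is an existential statement over $P$, so its negation is a universal one, not the single disjunction ``$m_T$ not divisible by $P_{2N+1}/|\xi|^{2N+1}$, or $u(\xi_0)=0$''. Finally, your lower bound ``$T^{\epsilon_k}f_k$ is bounded below by the edge contribution'' is asserted but not justified; making it rigorous forces you to use precisely the truncation identity $T^\epsilon f = (Tf)*b_\epsilon - (\beta_0)_\epsilon * f$, at which point you already have the multiplier inequality and the wave packets are a detour. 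In short: same chain of implications, an equivalent presentation of (iii)~$\Rightarrow$~(i), but (ii)~$\Rightarrow$~(iii) as you outline it both deviates from the paper's strategy and would fail at the real-divisibility step unless you import the complex/Hausdorff argument the paper alludes to.
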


Sections \ref{Verdera:Sec:2} and \ref{Verdera:Sec:4} contain ,
respectively, the proofs of Theorems \ref{eteo} and  \ref{oteo} for
the most simple kernels. In Section \ref{Verdera:Sec:3} we show that
the Hilbert transform does not satisfy the pointwise inequality
\eqref{peven}. In Section \ref{Verdera:Sec:5} we prove that
condition (iii) in Theorem \ref{MOV} is necessary and in Section
\ref{Verdera:Sec:6} that it is sufficient,  in both cases in
particularly simple situations. Section \ref{Verdera:Sec:7} contains
brief comments on the proof of the general case and a mention of a
couple of open problems.

\section{Proof of Theorem \ref{eteo} for second order Riesz transforms.}
\label{Verdera:Sec:2} For se sake of clarity we work only with the
second order Riesz transform $T$ with kernel
$$\frac{x_{1}x_{2}}{|x|^{n+2}} .$$

The inequality to be proven, namely \eqref{peven}, is invariant by
translations and by dilations, so that we only need to show that

\begin{equation}\label{po}
|T^{1}f(0)|\le C\, M(Tf)(0),
\end{equation}
where
$$T^1 f(0) = \int_{\R^n \setminus B}
\frac{x_{1}x_{2}}{|x|^{n+2}}f(x)\,dx$$ is the truncation at level
$1$ at the origin. Here $B$ is the unit (closed) ball centered at
the origin. A natural way to show \eqref{po} is to find a function
$b$ such that
$$
\chi_{\R^n \setminus B}(x)\,\frac{x_{1}x_{2}}{|x|^{n+2}} = T(b).
$$
One should keep in mind that $T$ is injective but not onto. Then
there is no reason whatsoever for such a $b$ to exist. If such a $b$
exists then
\begin{equation}\label{trunc}
T^1 f(0) = \int\!\! Tb(x) \; f(x)\,dx = \int b(x)\;T(f)(x)\,dx
\end{equation}
If moreover $b$ is in $L^\infty(\R^n)$ and is supported on $B$, we
get
$$
|T_{1}f(0)|\le \|b\|_{\infty}|B| \frac{1}{|B|} \int_B |T(f)(x)|\,dx
\le C M(T(f))(0).
$$

Thus everything has been reduced to the following lemma.

\begin{lemma}\label{nuc}
There exists a bounded measurable function $b$ supported on $B$ such
that
$$
\chi_{\R^n \setminus B}(x)\,\frac{x_{1}x_{2}}{|x|^{n+2}} = T(b)(x),
\quad \text{for almost all}\quad x \in \R^n.
$$
\end{lemma}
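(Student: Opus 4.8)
The plan is to work on the Fourier transform side, where the higher Riesz transform $T$ acts by multiplication by the symbol $m(\xi) = c\,\xi_1\xi_2/|\xi|^2$ (a constant $c$ aside, by the formula of \cite[p.73]{St}). If $b$ solves $T(b) = g$ with $g(x) = \chi_{\R^n\setminus B}(x)\,x_1x_2/|x|^{n+2}$, then formally $\hat b = \hat g / m$, i.e. $\hat b(\xi) = c^{-1}\,|\xi|^2\,\hat g(\xi)/(\xi_1\xi_2)$. This looks dangerous because of the zeros of $\xi_1\xi_2$, so the cleaner route is to observe that $-\Delta$ has symbol $|\xi|^2$ and the degree-$2$ harmonic polynomial $P(\xi)=\xi_1\xi_2$ corresponds to the constant-coefficient differential operator $\partial_1\partial_2$; thus one expects $b$ to be obtained from $g$ by applying $(-\Delta)^{-1}$ and then $\partial_1\partial_2$ — but in fact it is better to go the other way and write $b = (-\Delta)(\text{something})$, recovering a genuine (local, compactly supported) differential operator rather than a nonlocal one. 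Concretely, I would look for $b$ of the form $b = \partial_1\partial_2 \varphi$ where $\varphi$ is chosen so that $R_P(b) = g$; since $R_P \circ \partial_1\partial_2 = -(-\Delta)\circ(\text{const})$ after matching symbols, one is led to solving a Poisson-type equation $-\Delta \varphi = \text{(const)}\cdot g$ away from the right normalization, and then $b$ is a second derivative of the Newtonian potential of $g$.

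The key steps, in order: (1) Write down the candidate explicitly. Outside $B$, $g(x) = x_1x_2|x|^{-n-2}$ is, up to a constant, $\partial_1\partial_2(|x|^{-n+2})$ for $n\ge 3$ (and $\partial_1\partial_2\log|x|$ for $n=2$), i.e. $g$ is a second derivative of a harmonic function on $\R^n\setminus\{0\}$ — this is just the statement that $x_1x_2/|x|^{n+2}$ is the kernel of the second-order Riesz transform. Inside $B$, I must define $b$ and check it is bounded. The natural guess is that $b$ is, up to normalization, $\partial_1\partial_2$ of a fundamental-solution-type expression that is $|x|^{-n+2}$ for $|x|>1$ and a smooth (polynomial, say quadratic) radial modification for $|x|\le 1$, glued in $C^1$ fashion. (2) Verify the gluing: choose the interior radial function $h(|x|)$ so that $h$ and $h'$ match $|x|^{-n+2}$ and its derivative at $|x|=1$; then $\Phi(x) := h(|x|)$ on $B$, $|x|^{-n+2}$ off $B$, is $C^{1,1}$, and $-\Delta\Phi$ is a bounded, compactly supported function (it is $0$ off $\overline B$ since $|x|^{-n+2}$ is harmonic there, and bounded on $B$ since $h$ is smooth). (3) Set $b := c\,\partial_1\partial_2\Phi$ with the correct constant $c$. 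Then $b$ is supported on $\overline B$ (outside $B$, $\partial_1\partial_2|x|^{-n+2}$ is a nonzero function! — see the obstacle below), bounded (second derivatives of a $C^{1,1}$ function with bounded distributional Hessian, plus one must be careful that $\partial_1\partial_2\Phi$ has no singular part on $\partial B$), and one computes $R_P(b) = g$ by checking the symbols agree, i.e. $R_P\partial_1\partial_2 = \text{(const)}(-\Delta)$ on the relevant class, so $R_P(b) = c'(-\Delta)\Phi$, which equals $g$ off $B$ after choosing $c'$ and vanishes... no — $-\Delta\Phi$ is supported on $B$, while $g$ is supported off $B$, so this direct identification fails and the argument must be arranged differently.

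The main obstacle is exactly this mismatch of supports, which shows the naive ansatz is wrong and reveals where the real work lies: $b$ cannot simply be $\partial_1\partial_2$ of a globally harmonic-outside-$B$ function, because applying $R_P$ to such a $b$ does not return $g$. The correct approach, which I expect to be the technical heart of the proof, is to solve directly for $b$ supported on $B$ from the integral equation $\int_B b(y)\,K(x-y)\,dy = x_1x_2/|x|^{n+2}$ for $|x|>1$, using a spherical harmonics / Gegenbauer expansion: expand both sides in spherical harmonics in the $x$ variable, use the fact that $K$ reproduces the degree-$2$ harmonic $P(x) = x_1x_2$ with a definite eigenvalue and annihilates nothing in a controllable way, and reduce to a one-dimensional (in the radial variable) moment problem for the radial profiles of $b$ against powers of $|y|$. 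The requisite decay/boundedness of $b$ then follows from the smoothness and the explicit rate at which the spherical-harmonic components of $x_1x_2/|x|^{n+2}$ decay. The delicate point is ensuring the resulting $b$ is genuinely in $L^\infty(B)$ and not merely in some $L^p$ or distributional class; this is where the injectivity-but-not-surjectivity of $T$, flagged in the text, makes itself felt, and where the argument must exploit the special algebraic structure of $P(x)=x_1x_2$ rather than soft functional analysis.
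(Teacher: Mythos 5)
You had the right key idea but got the two differential operators backwards, and the resulting ``mismatch of supports'' that made you abandon the approach is an artifact of that swap, not a real obstruction. You set $b=\partial_1\partial_2\Phi$ and tried to force $R_P(b)=(-\Delta)\Phi$ to equal $g=\chi_{\R^n\setminus B}\,x_1x_2/|x|^{n+2}$; since $(-\Delta)\Phi$ is supported in $B$ while $g$ lives outside $B$, this indeed cannot work. The paper does the opposite: it sets $b:=\Delta\Phi$, where $\Phi$ equals the fundamental solution $E$ of the Laplacian on $\R^n\setminus B$ and equals $A_0+A_1|x|^2$ on $B$, glued $C^1$ across $\partial B$. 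Then $b=2nA_1\chi_B$ is bounded and supported on $B$, exactly as required. Because $\Phi=E*\Delta\Phi$, one has $T(b)=c_n^{-1}\,\partial_1\partial_2 E * b=c_n^{-1}\,\partial_1\partial_2\Phi$, and now the support bookkeeping works out in your favor rather than against you: outside $B$, $\partial_1\partial_2 E=c_n\,x_1x_2/|x|^{n+2}$ gives the kernel; inside $B$, the observation you did not use — that $\partial_1\partial_2|x|^2=0$ because $|x|^2$ has no mixed term — makes $\partial_1\partial_2\Phi$ vanish identically. So $T(b)=g$ with no leftover interior term.

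In short, the gap is concrete: you applied $\partial_1\partial_2$ where you should have applied $\Delta$ to produce $b$, and vice versa to produce $T(b)$. Having concluded (incorrectly) that the PDE route fails, you then sketched a fallback via spherical-harmonic expansion and a radial moment problem, but you neither carried it out nor indicated how to extract boundedness of $b$ from it; as written, that part is a program, not a proof. If you simply interchange the roles of $\Delta$ and $\partial_1\partial_2$ in your step (2)--(3), your construction becomes the paper's and the lemma follows in a few lines.
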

\begin{proof}
%\smartqed
Let $E$ be the standard fundamental solution of the Laplacian in
$\R^n$. Then, for some dimensional constant $c_n,$ we have that, in
the distributions sense,
\begin{equation}
\partial_1 \partial_2 E = c_n \; p.v. \frac{x_{1}x_{2}}{|x|^{n+2}}.
\end{equation}
Let us define a function $\varphi$ by
\begin{equation}\label{fi}
\varphi(x)=\begin{cases}
E(x)&\text{on }\R^n \setminus B\\
A_{0}+A_{1}|x|^2&\text{on }B
\end{cases}
\end{equation}
where the constants $A_0$ and $A_1$ are chosen so that $\varphi$ and
$\nabla \varphi$ are continuous on $\R^n.$  This is possible
because, for each $i,$
\begin{equation*}
\partial_i \varphi (x) = \begin{cases}
c_n\;\frac{x_{i}}{|x|^{n}}, &  \quad x \in \R^n \setminus B \\
2 A_1 x_i, &  \quad x \in B
\end{cases}
\end{equation*}
and so, for an appropriate choice of $A_1$, the above two
expressions coincide on $\partial B$ for all $i,$ or, equivalently,
$\nabla\varphi$ is continuous. The continuity of $\varphi$ is now
just a matter of choosing $A_0$ so that $E(x) = A_0 +A_1 |x|^2$ on
$\partial B,$ which is possible because $E$ is radial.

The continuity of $\varphi$ and $\nabla\varphi$ guaranties that we
can compute a second order derivative of $\varphi$ in the
distributions sense by just computing it pointwise on $B$ and on
$\R^n \setminus B.$ The reason is that no boundary terms will appear
when applying Green-Stokes to compute the action of the second order
derivative of $\varphi$ under consideration on a test function.
Therefore
$$
\Delta \varphi = 2 n A_1 \chi_B \equiv b,
$$
where the last identity is the definition of $b.$ Since $\varphi = E
* \Delta\varphi $ we obtain, for some dimensional constant $c_n,$
$$
\partial_1\partial_2 \varphi = \partial_1\partial_2 E *
\Delta\varphi = c_n\;p.v. \frac{x_{1}x_{2}}{|x|^{n+2}}*
\Delta\varphi = c_n\; T(b).
$$
On the other hand, by \eqref{fi} and noticing that
$\partial_1\partial_2 |x|^2 = 0$, we get
$$
\partial_1\partial_2 \varphi = \chi_{\R^n \setminus B} (x) c_n\,
\frac{x_{1}x_{2}}{|x|^{n+2}},
$$
and the proof of Lemma \ref{nuc} is complete. %\qed
\end{proof}

Notice that \eqref{trunc} together with the special form of the
function $b$ found in the proof of Lemma \ref{nuc} yield the formula
\eqref{mean}, namely, that a truncation at level $\epsilon$ at the
point $x$ of $S(f)$, $S$ being a second order Riesz transform, is
the mean of $S(f)$ on the ball $B(x,\epsilon)$ .

\section{The pointwise control of $T^*$ by $M\circ T$ fails for the Hilbert transform}
\label{Verdera:Sec:3}

We show now that the inequality
\begin{equation}\label{pointwiseH}
H^*f(x)\le C\, M(Hf)(x)\,, \quad x \in \R\, \quad f \in L^2(\R),
\end{equation}
where $H$ is the Hilbert transform, fails. Replacing $f$ by $H(f)$
in \eqref{pointwiseH} and recalling that $H(Hf)=-f\,,\,\, f \in
L^2(\R)$\,, we see that \eqref{pointwiseH} is equivalent to
$$H^*(H(f))(x)\le C\, M(f)(x)\,, \quad x \in \R, \quad  f \in L^2(\R).$$

It turns out that the operator $H^* \circ H$ is not of weak type
$(1,1)$.

Let us prove that if $f=\chi_{(0,1)}$, then there are positive
constants $m$ and $C$ such that whenever $x>m$,
\begin{equation}\label{log}
 H^*(Hf)(x)\ge C\,\frac{\log x}{x}
\end{equation}
This shows that $H^* \circ H$ is not of weak type $(1,1).$ Indeed,
choosing $m>e$ if necessary, we have
$$ \sup_{\lambda>0} \lambda \, |\{ x\in \R: H^*(Hf)(x) >
\lambda \} | \ge \sup_{\lambda>0} \lambda \, |\{ x > m: \frac{\log
x}{x}
> C^{-1}\, \lambda \} |
$$
$$
= C\, \sup_{\lambda>0} \lambda \, | \{ x >m  :  \frac{\log x }{x}
> \lambda \} | \ge C\, \sup_{\lambda>0} \lambda \,(
\varphi^{-1}(\lambda) - e),
$$
where $\varphi$ is the decreasing function $\varphi :(e,\infty)
\rightarrow  (0,e^{-1})$,  given by $\varphi(x) = \frac{\log x
}{x}$. To conclude observe that the right hand side of the estimate
is unbounded as $\lambda \rightarrow 0$:
$$
\lim_{\lambda \rightarrow 0}      \lambda \varphi^{-1}(\lambda) =
\lim_{\lambda \rightarrow \infty} \varphi(\lambda) \lambda = \infty.
$$

To prove \eqref{log} we recall that for $f=\chi_{(0,1)}$
$$Hf(y)
=\log\frac{|y|}{|y-1|}.
$$

Let $m>1$ big enough to be chosen later on. Take $x>m$. By
definition of $H^*$
$$H^*(Hf)(x)
\ge   \left| \int_{ |y-x|> m+x } \frac{1}{y-x}\,
\log\frac{|y|}{|y-1|} \,dy \right|
$$
and splitting the integral in the obvious way
$$\int_{-\infty}^{-m} \frac{1}{y-x}\log\frac{-y}{-y+1} \,dy +
\int_{2x+m}^{\infty} \frac{1}{y-x}\log\frac{y}{y-1}\,dy
$$
$$=
\int_{m}^{\infty} \frac{1}{x+y}\log\frac{y+1}{y} \,dy +
\int_{2x+m}^{\infty} \frac{1}{y-x}\log\frac{y}{y-1}\,dy =A(x)+B(x),
$$
where  both $A(x)$ and $B(x)$ are positive.  Hence
$$H^*(Hf)(x)
\ge A(x).
$$
Since $$\log(1+\frac1y) \approx \frac1y, \quad \quad \text{as}\quad
y \rightarrow \infty,$$ there is a constant $m>1$ such that whenever
$y>m$
$$ \frac12<\frac{\log(1+\frac1y) }{\frac1y} < \frac32.$$
Hence, for this constant $m$ we have
$$ A(x) = \int_{m}^{\infty} \frac{1}{x+y}\log\left(1+\frac{1}{y}
\right)\,dy \approx \int_{m}^{\infty} \frac{1}{x+y} \,\frac{dy}{y} =
\frac1x \log \frac{y}{x+y}\Big|_{m}^{\infty} \approx \frac{\log
x}{x}\,,
$$
which proves \eqref{log}.

Notice that the term $B(x)$ is better behaved :
$$ B(x) \le  \int_{2x+m}^{\infty} \frac{1}{y-x}\log\frac{y}{y-1} \,dy
\le \int_{2x+m}^{\infty} \frac2y \,\frac{dy}{y}
 \le \frac1x.
$$

\section{Proof of Theorem \ref{oteo} for first order Riesz transforms}
\label{Verdera:Sec:4}

In this Section we prove that
\begin{equation}\label{firstRiesz}
R_j^*(f)(x) \le C\,M^2(R_j(f)), \quad x \in \R^n,
\end{equation}
where $R_j$ is the $j$-th  Riesz transform, namely, the
Calder\'{o}n--Zygmund operator with kernel
$$
\frac{x_j}{|x|^{n+1}}, \quad x \in \mathbb{R}^n \setminus \{0\},
\quad 1 \leqslant j \leqslant n .
$$
Recall that $M^2 = M \circ M $ and notice that for $n=1$ we are
dealing with the Hilbert transform. The inequality
\eqref{firstRiesz} for the Hilbert transform is, as far as we know,
new. To have a glimpse at the difficulties we will encounter in
proving \eqref{firstRiesz} we start by discussing the case of the
Hilbert transform.

As in the even case we want to find a function $b$ such that
$$
\frac{1}{x} \chi_{\,\R \setminus (-1,1)}(x) = H(b).
$$
Since $H(-H)=I$
\begin{equation*}
\begin{split}
b(x) & = - H(\frac{1}{y} \chi_{\,\R \setminus (-1,1)}(y))
(x)\\*[5pt] & =\frac{1}{\pi} \int_{|y|>1}  \frac{1}{y-x}
\frac{1}{y}\,dy \\*[5pt] & = \frac{1}{\pi x}
\,\log\frac{|1+x|}{|1-x|}\,.
\end{split}
\end{equation*}
We conclude that, unlike in the even case, the function $b$ is
unbounded and is not supported in the unit interval $(-1,1).$ On the
positive side, we see that $b$ is a function in $BMO=BMO(\R),$ the
space of functions of bounded mean oscillation on te line. Since $b$
decays at infinity as $1/x^2$,\; $b$ is integrable on the whole
line. However, the minimal decreasing majorant of the absolute value
of $b$ is not integrable, owing to the poles at $\pm 1.$  This
prevents a pointwise estimate of $H^*f$ by a constant times $M(Hf).$
 We can now proceed with the proof of \eqref{firstRiesz} keeping in
mind the kind of difficulties we will have to overcome.

We start with the analog of Lemma \ref{nuc}. We denote by
$\operatorname{BMO}$ the space of functions of bounded mean
oscillation on $\R^n.$
\begin{lemma}\label{onuc}
There exists a function $b \in \operatorname{BMO}$ such that
\begin{equation}\label{nucfora}
\chi_{\R^n \setminus B}(x)\, \frac{x_j}{|x|^{n+1}} = R_j (b)(x),
\quad \text{for almost all} \quad x \in \R^n, \quad  1 \leqslant j
\leqslant n \, .
\end{equation}
\end{lemma}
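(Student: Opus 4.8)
The plan is to mimic the argument of Lemma~\ref{nuc}, replacing the role of the second-order derivative $\partial_1\partial_2$ by a first-order derivative $\partial_j$, and the fundamental solution $E$ of the Laplacian by the fundamental solution of a first-order (pseudo-differential) object. Concretely, recall that, up to a dimensional constant, $\partial_j E = c_n\, x_j/|x|^n$, so the kernel $x_j/|x|^{n+1}$ of $R_j$ is $c_n\,\partial_j$ of the kernel $x_j/|x|^n$ of... this circularity suggests the right substitute. Since $R_j$ has Fourier multiplier $-i\xi_j/|\xi|$, we have $R_j = \partial_j \circ (-\Delta)^{-1/2}$ in the sense of multipliers, and more to the point $(-\Delta)^{1/2} R_j f = \partial_j f$ for nice $f$. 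The kernel of $(-\Delta)^{-1/2}$ is $c_n/|x|^{n-1}$ (for $n\ge 2$), so the idea is: set $\varphi = (-\Delta)^{-1/2}$ applied to the truncated Riesz kernel $g(x) := \chi_{\R^n\setminus B}(x)\,x_j/|x|^{n+1}$, smooth it near the sphere $\partial B$ to remove the discontinuity, and then differentiate.

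The key steps, in order, would be: (1) Observe that $g = \partial_j \Phi$ where $\Phi(x) = c_n|x|^{1-n}$ for $|x|\ge 1$ — indeed $\partial_j(c_n|x|^{1-n}) = c_n(1-n)x_j|x|^{-n-1}$, so with the right normalization $g = \partial_j\Phi$ on $\R^n\setminus B$. (2) Define $\varphi$ by taking $\varphi = \Phi$ on $\R^n\setminus B$ and extending it to $B$ by a smooth radial function matched so that $\varphi$ and $\nabla\varphi$ are continuous across $\partial B$; since $\Phi$ is radial this is the same matching game as in Lemma~\ref{nuc} (choose a radial polynomial or smooth radial function with two free constants to match value and normal derivative). (3) As in Lemma~\ref{nuc}, continuity of $\varphi$ and $\nabla\varphi$ means $\partial_j\varphi$ can be computed distributionally by computing it pointwise on $B$ and on $\R^n\setminus B$ — no boundary terms — so $\partial_j\varphi = g + h$ where $h := \partial_j\varphi|_{B}$ is a bounded, compactly supported function (in fact smooth on $B$). (4) Apply $(-\Delta)^{1/2}$: since $\varphi = (-\Delta)^{-1/2}$ of something, or better, set $b := (-\Delta)^{1/2}\varphi$ and compute $R_j b = R_j (-\Delta)^{1/2}\varphi = \partial_j\varphi = g + h$; since $g = R_j(b) - h$ and $h = R_j(R_j^{-1}... )$ — cleaner: just take $b := (-\Delta)^{1/2}\varphi - R_j^{-1}h$... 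I would instead directly define $b$ so that $R_j b = g$ by writing $b = (-\Delta)^{1/2}\varphi - (R_j\text{-preimage of }h)$, but the honest approach is: $g = \partial_j\varphi - h$, and $\partial_j\varphi = (-\Delta)^{1/2}(R_j\varphi)$... Let me restate cleanly as step (4): write $g = \partial_j \varphi - h$; then $\partial_j\varphi = R_j\big((-\Delta)^{1/2}\varphi\big)$ and $h = R_j\big((-\Delta)^{1/2}(-\Delta)^{-1/2}\,??\big)$. The right move is to absorb $h$ by noting $h$ is smooth and compactly supported, hence $h = R_j(\tilde b)$ for $\tilde b = R_j^{-1}h = -R_j h$ (using $R_j^2 = $ a bounded operator; actually $\sum R_k^2 = -I$, so invert via that), and $\tilde b \in \mathrm{BMO}$ — in fact $\tilde b$ is better than BMO. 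So $b := (-\Delta)^{1/2}\varphi - \tilde b$ satisfies $R_j b = g$. (5) Finally verify $b \in \mathrm{BMO}$: the term $\tilde b$ is fine; for $(-\Delta)^{1/2}\varphi$ one checks that $\varphi$ agrees with $c_n|x|^{1-n}$ off a compact set and is smooth, so $(-\Delta)^{1/2}\varphi$ is bounded away from $\partial B$ and has at worst a logarithmic (BMO-type) singularity coming from the jump in second derivatives across $\partial B$ — exactly the phenomenon flagged in the Hilbert transform discussion, where $b = \frac{1}{\pi x}\log\frac{|1+x|}{|1-x|}$ is unbounded near $\pm 1$ but lies in BMO.

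I expect the main obstacle to be step (5) — proving the $\mathrm{BMO}$ bound cleanly, and more precisely handling the operator $(-\Delta)^{1/2}$ applied to a function with a jump in its second-order derivatives across the sphere. Unlike the even case, where $(-\Delta)^{d/2}$ with $d$ even is a genuine differential operator that sees only local data, here $(-\Delta)^{1/2}$ is nonlocal, so $b = (-\Delta)^{1/2}\varphi$ is not compactly supported and not bounded, and one must show the singularity of $b$ across $\partial B$ is only logarithmic (hence BMO) and that its decay at infinity — governed by the decay of $\varphi - c_n|x|^{1-n}$, which vanishes identically outside $B$, so the tail comes purely from the nonlocal kernel of $(-\Delta)^{1/2}$ acting on the compactly-supported "defect" $h$ — is controlled. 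A convenient technical device is to work on the Fourier side: $\widehat g(\xi)$ can be computed, and $\widehat b(\xi) = (-i\xi_j/|\xi|)^{-1}\widehat g(\xi) = i|\xi|\xi_j^{-1}\widehat g(\xi)$, from which membership in BMO follows by checking the standard Fefferman–Stein / John–Nirenberg criterion or by exhibiting $b$ explicitly as a sum of a bounded function and something like $\log$ of a smooth function vanishing simply on $\partial B$, in analogy with the one-dimensional formula above. (For $n=1$ one should note $(-\Delta)^{-1/2}$ has a logarithmic kernel, so the computation is the explicit one already displayed for the Hilbert transform and gives $b = \frac1{\pi x}\log\frac{|1+x|}{|1-x|} \in \mathrm{BMO}(\R)$ directly.)
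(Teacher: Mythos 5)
Your overall blueprint---treat $E(x)=c_n|x|^{1-n}$ as a fundamental solution of $(-\Delta)^{1/2}$, glue a radial extension of $E$ inside $B$ to form $\varphi$, and set $b=(-\Delta)^{1/2}\varphi$---is exactly the paper's strategy. But two of your steps are genuinely off and one is missing.

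First, matching $\nabla\varphi$ across $\partial B$ is both unnecessary and harmful. Continuity of $\varphi$ alone is enough to compute the \emph{first}-order derivative $\partial_j\varphi$ distributionally with no boundary terms; the two-derivative matching in Lemma~\ref{nuc} was needed there only because one had to take a \emph{second}-order derivative. The paper exploits this: it takes $\varphi\equiv c_n$ (constant) on $B$, so $\partial_j\varphi=0$ on $B$ and $\partial_j\varphi$ is exactly $-(n-1)c_n\chi_{\R^n\setminus B}(x)\,x_j/|x|^{n+1}$---there is no spurious interior term $h$. Your extension $A_0+A_1|x|^2$ creates $h=2A_1x_j\chi_B\neq 0$, and then your attempt to kill it via $\tilde b=-R_jh$ is simply wrong for $n\geq 2$: $R_j^2\neq -I$ (its symbol is $-\xi_j^2/|\xi|^2$), and $\sum_k R_k^2=-I$ does \emph{not} furnish a one-sided inverse for a single $R_j$. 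Nor is ``$h$ smooth and compactly supported $\Rightarrow$ $h\in\operatorname{range}(R_j)$'' automatic; $R_j$ is not surjective (its symbol vanishes on $\{\xi_j=0\}$). Pick the constant extension and the whole $h$-business evaporates.

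Second, the BMO membership is not actually proved; you gesture at Fefferman--Stein and extrapolate from the explicit $n=1$ formula, but you give no argument for general $n$. The paper's argument is one line: on the Fourier side, $b=(-\Delta)^{1/2}\varphi=\gamma_n\sum_{k=1}^n R_k(\partial_k\varphi)$; each $\partial_k\varphi=-(n-1)c_n\chi_{\R^n\setminus B}(x)\,x_k/|x|^{n+1}$ is bounded, and $R_k$ maps $L^\infty$ into $BMO$, so $b\in BMO$. You should supply this (or something of comparable concreteness); as written, step (5) is the key claim of the lemma and your proposal leaves it open.
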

\begin{proof}
%\smartqed
For an appropriate constant $c_n$ the function
$$
E(x)= c_n\,\frac{1}{|x|^{n-1}}, \quad 0 \neq x \in \R^n
$$
satisfies
$$
\widehat{E}(\xi)= \frac{1}{|\xi|}, \quad 0 \neq \xi \in \R^n.
$$
Since the pseudo-differential operator  $(-\Delta)^{1/2}$ is defined
on the Fourier transform side as
$$
\widehat{ (-\Delta)^{1/2} \psi}(\xi) = |\xi| \hat{\psi}(\xi),
$$
$E$ may be understood as a fundamental solution of
$(-\Delta)^{1/2}.$ This will allow to structure our proof in
complete analogy to that of Lemma \ref{nuc} until new facts emerge.
Consider the function $\varphi$ that takes the value $c_n$ on $B$
and $E(x)$ on $\R^n \setminus B$ . We have that $\varphi = E
* (-\Delta)^{1/2} \varphi$ and we define $b$ as $(-\Delta)^{1/2}
\varphi.$

As it is well known
$$
\partial_j E = -(n-1) c_n \, p.v. \frac{x_j}{|x|^{n+1}} \,,
$$
in the distributions sense and, since $\varphi$ is continuous on the
boundary of $B$,
\begin{equation}\label{dfi}
\partial_j \varphi = -(n-1) c_n \chi_{\R^n \setminus B}(x)  \frac{x_j}{|x|^{n+1}}
\end{equation} also in the distributions sense. Then
\begin{equation*}
\begin{split}
-(n-1) c_n \chi_{\R^n \setminus B}(x)  \frac{x_j}{|x|^{n+1}} & =
\partial_j \varphi \\*[5pt] & = \partial_j E * b \\*[5pt] & = -(n-1) c_n \, p.v.
\frac{x_j}{|x|^{n+1}}* b,
\end{split}
\end{equation*}
which is \eqref{nucfora}. It remains to show that $b \in BMO.$

 Checking on the Fourier transform side we easily see that
\begin{equation}\label{b}
b= (-\Delta)^{1/2} \varphi = \gamma_n \,\sum_{k=1}^{n}
R_k(\partial_k \varphi),
\end{equation}
for some dimensional constant $\gamma_n.$  Since $\partial_k
\varphi$ is a bounded function by \eqref{dfi} and $R_k$ maps
$L^\infty$ into $\operatorname{BMO}$, $b$ is in $\operatorname{BMO}$
and the proof is complete. %\qed
\end{proof}

Unfortunately $b$ is not bounded and is not supported on $\R^n
\setminus B.$ Moreover one can check easily that $b$ blows up at the
boundary of $B$ as the function $\log (1/|1-|x||).$  This entails
that the the minimal decreasing majorant of the absolute value of
$b$ is not integrable, as in the one dimensional case.

We take up now the proof of \eqref{firstRiesz}. By translation and
dilation invariance we only have to estimate the truncation of $R_j
f$ at the point $x=0$ and at level $\epsilon = 1$.  By Lemma
\ref{onuc}
\begin{equation*}\label{truncRiesz}
\begin{split}
 R_j^1 f(0) &=  - \int  \chi_{\R^n \setminus B}(x)\, \frac{x_j}{|x|^{n+1}} \,f(x)\, dx = - \int R_j b(x)\,f(x)\,dx\\*[5pt]
& = \int b(x) \, R_j f(x)\, dx\,.
\end{split}
\end{equation*}
Let $b_{2B}$ denote the mean of $b$ on the ball $2 B.$  We split the
last integral above into three pieces
\begin{equation}
\begin{split}
R_j^1 f(0) & = \int_{2B} (b(x)-b_{2B}) \, R_jf(x) \,dx + b_{2B}\,
\int_{2B} R_jf(x) \,dx + \int_{\R^n \setminus 2B} b(x) \, R_jf(x)
\,dx
\\*[5pt] & = I_1+I_2 +I_3 \,.
\end{split}
\end{equation}
Since $b_{2B}$ is a dimensional constant the term $I_2$ can be
immediately estimated by $C \, M(R_j f)(0).$   The term $I_3$ can
easily be estimated if we first prove that
\begin{equation}\label{decay}
|b(x)| \leqslant C \frac{1}{|x|^{n+1}},\quad |x|\geqslant 2\,.
\end{equation}
Indeed, the preceding decay inequality yields
$$
|I_3| \leqslant  C  \int_{\R^n \setminus 2B} |R_j f (x)|
\frac{1}{|x|^{n+1}}\, dx \leqslant C \, M(R_jf)(0)\,.
$$

To prove \eqref{decay} express $b$ by means of \eqref{b}
\begin{equation*}
\begin{split}
\frac{b}{\gamma_n}= \sum_{k=1}^n R_k \star \chi_{\R^n \setminus
B}(x)\, \frac{x_k}{|x|^{n+1}} & = \sum_{k=1}^n R_k \star R_k -
\sum_{k=1}^n R_k \star \chi_{B}(x) \frac{x_k}{|x|^{n+1}} \\ & =
\gamma'_n \, \delta_0 - \sum_{k=1}^n R_k (\chi_{B}(x)
\frac{x_k}{|x|^{n+1}})\,,
\end{split}
\end{equation*} where $\gamma'_n$ is a dimensional constant and $\delta_0$ the dirac delta at the origin.
The preceding formula for $b$ looks magical and one may even think
that some terms make no sense. For instance, the term $ R_k \star
R_k$ should not be thought as the action of the $k$-th Riesz
transform of the distribution $p.v.\, x_k/|x|^{n+1}$. It is more
convenient to look at it on the Fourier transform side, where you
see immediately that it is $\gamma'_n \, \delta_0$. The term $R_k
\star \chi_{B}(x) \frac{x_k}{|x|^{n+1}}$ should be thought as a
distribution, which acts on a test function as one would expect via
principal values (see below).

 If $|x|> 1$ we have
\begin{equation*}
\begin{split}
 R_k (\chi_{B}(x)\frac{x_k}{|x|^{n+1}})(x) &=  \lim_{\epsilon \rightarrow 0} \int_{\epsilon< |y|< 1}
\frac{x_k-y_k}{|x-y|^{n+1}} \frac{y_k}{|y|^{n+1}}\, dy \\*[5pt] &=
\lim_{\epsilon \rightarrow 0} \int_{\epsilon< |y|< 1}
\,\left(\frac{x_k-y_k}{|x-y|^{n+1}}-\frac{x_k}{|x|^{n+1}}
\right)\frac{y_k}{|y|^{n+1}}\, dy \,.
\end{split}
\end{equation*}
Since
$$
|\frac{x_k-y_k}{|x-y|^{n+1}}-\frac{x_k}{|x|^{n+1}}| \leqslant  C
\frac{|y|}{|x|^{n+1}}, \quad |x|\geqslant 2,\quad  |y|\leqslant 1
\,,
$$
we obtain, for $|x|\geqslant 2$,
$$
|R_k (\chi_{B}(x)\frac{x_k}{|x|^{n+1}})(x)|\leqslant  C
\int_{|y|<1} \frac{1}{|x|^{n+1}} \frac{1}{|y|^{n-1}} \,dy =
\frac{C}{|x|^{n+1}}\,,
$$
which gives \eqref{decay}.

We are left with the term $I_1.$  Since $b$ is in $BMO$ it is
exponentially integrable by John-Nirenberg's Theorem. We estimate
$I_1$ by Holder's inequality associated with the ``dual'' Young
functions $e^t-1$ and $t + t \log^+ t$ \,(\cite[p. 165]{GrMF}). We
get
$$
|I_1| \leqslant C\,\|b\|_{BMO} \|R_jf\|_{L\log{L}(2B)},
$$
where, for an integrable function g on $2B$,
$$ \|g\|_{L \log{L}(2B)}= \inf \{\lambda>0 \,:\, \frac{1}{|2B|} \int_{2B} \left
(\frac{|g(x)|}{\lambda} + \frac{|g(x)|}{\lambda} \log^+
(\frac{|g(x)|}{\lambda})\right )\,dx \leqslant 1 \}.
$$
It is a nice fact (see \cite{P} or \cite[p.159]{GrMF})  that the
maximal operator associated with $L\log L$, that is,
$$
M_{L(\log L)}g(x) = \sup_{Q \ni x}\|f\|_{L(\log L), Q},$$ the
supremum being over all balls $Q$, satisfies
\begin{equation}\label{iteration}
M_{L (\log{L})}f(x) \approx M^{2}f(x), \quad x \in \R^n.
\end{equation}
Thus
$$
|I_1| \le C\, M^2(R_j f)(0)
$$
and the proof of \eqref{firstRiesz} is complete.

\section{Necessary conditions for the $L^2$ estimate of $T^*f$ by $Tf$}
\label{Verdera:Sec:5} In this Section we find the necessary
conditions for the $L^2$ estimate
\begin{equation}\label{ela2}
 \| T^{\star} f  \|_2 \leqslant  C \| T f \|_2,\quad f \in L^2(\mathbb{R}^n)
\end{equation}
which are stated in (iii) of Theorem \ref{MOV} for the case of even
kernels. In particular, this will supply many even kernels for which
the preceding estimate fails (and thus the pointwise estimate in (i)
of Theorem \ref{MOV} fails).

We will look at the simplest possible situation. The kernel of our
operator $T$ in the plane is of the form
\begin{equation}\label{ke}
K(z)= -\frac{1}{\pi}\frac{xy}{|z|^4}+\frac{2}{\pi}
\frac{P_4(z)}{|z|^6},
\end{equation} where $z=x+i y$ is the complex variable in the plane $\C$ and
$P_4$ is a harmonic homogeneous polynomial of degree $4.$ The
constants in front of the two terms are set so that the expression
of the Fourier multiplier is the simplest. Indeed, the Fourier
transform of the principal value tempered distribution associated
with $K$ is
$$
\widehat{p.v.\,K}(\xi) = \frac{u v}{|\xi|^2}+
\frac{P_4(\xi)}{|\xi|^4}, \quad 0 \neq \xi =u+i v \in \C,
$$
by \cite[p.73]{St}. Our purpose is to find necessary conditions on
$P_4$ so that the $L^2$ estimate \eqref{ela2} holds. Notice that the
kernel $K$ is not harmonic, except in the case $P_4 =0$ which we
ignore. The spherical harmonics expansion of $K$ is reduced to the
sum of the two terms in \eqref{ke}.

Let $E$ be the standard fundamental solution of the bilaplacian
$\Delta^2$ in the plane. Thus
$$
E(z)= \frac{1}{8\pi}\,|z|^2\,\log|z|,\quad  \quad 0 \neq z \in \C,
$$
and $\hat{E}(\xi)= |\xi|^{-4}, \quad  0 \neq \xi \in \C.$  We have
$$
\left(\partial_1\partial_2 \Delta +P_4(\partial_1 ,\partial_2)
\right)(E) = p.v. K
$$
as one easily checks on the Fourier transform side. Here we adopt
the usual convention of denoting by $P_4(\partial_1 ,\partial_2)$ th
differential operator obtained by replacing the variables $x$ and
$y$ of $P_4$ by $\partial_1$ and  $\partial_2$ respectively.

Define a function $\varphi$ by
\begin{equation*}
\varphi(z)=\begin{cases}
E(z) & \quad \text{on } \mathbb{C} \setminus B\\
A_{0}+A_{1}|z|^2 + A_2 |z|^4  +A_3 |z|^6  & \quad \text{on }B
\end{cases}
\end{equation*}
where $B$ is the ball centered at the origin of radius $1.$ The
constants $A_j, 0 \le j \le 3,$ are chosen so that all derivatives
of $\varphi$ of order not greater than $3$ are continuous. This can
be done because $E$ is radial. With this choice to compute a fourth
order derivative of $\varphi$ in the distributions sense we only
need to compute the corresponding pointwise derivative of $\varphi$
in $B$ and on its complement. Set $b=\Delta^2 \varphi$, so that
$$
\varphi= E*\Delta^2 \varphi = E*b.
$$
A straightforward computation yields
$$
b= \Delta^2 \varphi = \chi_B(z) (\alpha+\beta |z|^2),
$$
for some constants $\alpha$ and $\beta.$  Then, as in the proof of
the $L^2$ estimate \eqref{ela2} for even second order Riesz
transforms presented in Section \ref{Verdera:Sec:2}, $b$ is
supported on the ball $B$ and is bounded. Set
$$
L = \partial_1\partial_2 \Delta+P_4(\partial_1 ,\partial_2),
$$
so that
$$
L(\varphi) = L(E)*b = p.v. K * b = T(b).
$$
On the other hand, by the definition of $\varphi$,
$$
L(\varphi) = \chi_{\C \setminus B}(z) K(z) + L(A_{0}+A_{1}|z|^2 +
A_2 |z|^4  +A_3 |z|^6 )\chi_B(z)\,.
$$
Now the term $ L(A_{0}+A_{1}|z|^2 + A_2 |z|^4  +A_3 |z|^6 )$ does
not vanish. Indeed, one can see that for some constant $c$
$$
 L(A_{0}+A_{1}|z|^2 + A_2 |z|^4  +A_3 |z|^6 ) = c\,x y \,.
$$
The result follows from the following three facts :
$$
 \left(\partial_1\partial_2 \Delta \right)(|z|^4) = 0,
$$
$$
 \left(\partial_1\partial_2 \Delta \right)(|z|^6) =  c\,x y
$$
and
$$
P_4(|z|^4) = P_4(|z|^6)=0.
$$
The last identity is due to the fact that $P_4$ is a homogeneous
harmonic polynomial of degree $4$. Notice that a priori $P_4(|z|^4)$
is a constant and $ P_4(|z|^6)$ is a homogeneous polynomial of
degree $2.$ The reader can verify that they are both zero just by
taking the Fourier transform and then checking their action on a
test function.

The conclusion is that
\begin{equation}\label{tebe}
T(b)= \chi_{\C \setminus B}(z) K(z) + c x y \chi_B(z).
\end{equation}
The novelty with respect to the argument of Section
\ref{Verdera:Sec:2} involving second order Riesz transforms is the
second term in the right hand side of the preceding formula.
Convolving \eqref{tebe} with a function $f$ in $L^2(\C)$ one gets
$$
c x y \chi_B(z) * f = T(f)* b - T^1(f),
$$
where $T^1 f$ is the truncation at level $1.$ Now, if \eqref{ela2}
holds then, since $b \in L^1(\C)$,
$$
\|c x y \chi_B(z) * f  \|_2 \le C \, \|T(f)\|_2, \quad f \in
L^2(\C),
$$
and hence, passing to the multipliers,
\begin{equation}\label{fu}
| \widehat{c x y \chi_B(z)} (\xi)| \le C\, \frac {|u v |\xi|^2+
P_4(\xi)|}{|\xi|^4}, \quad \xi \ne 0 .
\end{equation}
 Our next task is to understand the left hand side of  the above
inequality to obtain useful relations between the zero sets of the
various polynomials at hand. We should recall that the Fourier
transform of the characteristic function of the unit ball in $\R^2$
is $J_1(\xi)/|\xi|$, where $J_1(\xi)$ is the Bessel function of
order $1$. Write $G_m(\xi)= J_m(\xi)/|\xi|^m.$  The functions $G_m$
are radial and so we can view them as depending on a non-negative
real variable $r.$    We have \cite[p.425]{GrCF} the useful identity
$$
\frac{1}{r} \frac{d G_m}{dr}(r) = - G_{m+1}(r), \quad 0 \le r.
$$
From this it is easy to obtain the formula
\begin{equation*}
\begin{split}
\widehat{xy \chi_B(z)} (\xi) &=  - \partial_1 \partial_2
(G_1(|\xi|))
\\*[5pt] &= - u v \,G_3(|\xi|),
\end{split}
\end{equation*}
which transforms \eqref{fu} into
\begin{equation}\label{fu2}
| u v G_3(|\xi|)| \le C\, \frac {|u v |\xi|^2+ P_4(\xi)|}{|\xi|^4},
\quad \xi \ne 0 .
\end{equation}
Set
$$Q(\xi) = u v |\xi|^2+ P_4(\xi), \quad \xi \in \C .$$
Then \eqref{fu2} becomes, on the unit circle,
\begin{equation}\label{fu3}
| u v | \le C\,| Q(\xi)|, \quad  |\xi|=1.
\end{equation}
The above inequality encodes valuable information on the zero set of
$P_4.$ Recall that our goal is to show that $uv$ divides $P_4.$

Observe that $Q$ is a real polynomial with zero integral on the unit
circle, as sum of two non- constant homogeneous harmonic
polynomials. Thus $Q$ vanishes at some point $\xi=u+i v$ on the unit
circle. Then $ u v = 0$ by \eqref{fu3} and so $P_4(\xi)=0,$ owing to
the definition of $Q.$ We need now a precise expression for $P_4.$
The general harmonic homogeneous polynomial of degree $4$ is
\begin{equation}\label{qu}
\text{Re}(\lambda \xi^4) = \alpha (u^3 v - v^3 u)+ \beta (u^4+v^4-6
u^2 v^2),
\end{equation}
where $\lambda$ is a complex number and $\alpha$ and $\beta$ are
real. Assume that $P_4$ is as above. We know that $u^2+v^2 =1,$
$P_4(u,v)=0$ and that $u v=0.$ If $u=0$, then $\beta v^4 =0,$ which
yields $\beta=0.$ If $v=0$, then $\beta u^4 =0$ and we conclude
again that $\beta=0.$ Therefore
\begin{equation}\label{pe}
P_4(u,v)= \alpha (u^3 v - v^3 u)
\end{equation}
and $uv$ divides $P_4(u,v).$ We immediately conclude that the
operator $T$ with kernel
$$
K(z)= \frac{xy}{|z|^4}+ \frac{x^4+y^4-6 x^2 y^2}{|z|^6}, \quad 0 \ne
z \in \C,
$$
is an example in which the $L^2$ inequality \eqref{ela2} fails.
Before going on we remark that a key step in proving the division
property has been that $Q$ has at least one zero on the circle. This
is also a central fact in the proof of the general case.

We can easily deduce now another necessary condition for
\eqref{ela2} .  Substituting \eqref{pe} in \eqref{fu3} and
simplifying the common factor $u v$ we get
$$
0<|G_3(1)| \le C\, (1+ \alpha (u^2-v^2)), \quad |\xi|=1,
$$
which means that the right hand side cannot vanish on the unit
circle, namely, $|\alpha|< 1.$ Therefore we get the structural
condition
$$
T= R_P \circ U,
$$
where $R_P$ is the Riesz transform associated with the polynomial
$P(x,y)= -(1/\pi) \;x y$ and $U$ is an invertible operator in the
Calder\'{o}n--Zygmund algebra $A$.

Taking $\alpha=1$ we get an operator $T$ for which \eqref{ela2}
fails but whose kernel
$$
K(z)= -\frac{1}{\pi}\frac{xy}{|z|^4}+\frac{2}{\pi} \frac{x^3 y - x,
 y^3}{|z|^6}, \quad 0 \ne z \in \C,
$$
satisfies the division property of Theorem \ref{MOV} part (iii).

\section{Sufficient conditions for the $L^2$ estimate of $T^*f$ by $Tf$}
\label{Verdera:Sec:6}

In this Section we show how condition (iii) in Theorem \ref{MOV}
yields the pointwise inequality
\begin{equation}\label{pu}
 T^{*}f(z) \leqslant C \, M(Tf)(z),\quad z \in \C\,.
\end{equation}
As in the previous Section, we work in the particularly simple case
in which the spherical harmonics expansion of the kernel is reduced
to two terms. The first is a harmonic homogeneous polynomial of
degree $2$, which for definiteness is taken to be
$$
P(z)= -\frac{1}{\pi} x y.
$$
The second term is a fourth degree harmonic homogeneous polynomial.
The division assumption in (iii) of Theorem \ref{MOV} is that $P$
divides this second term. In view of the general form of a fourth
degree harmonic homogeneous polynomial \eqref{qu} we conclude that
our kernel must be of the form
$$
K(z)= -\frac{1}{\pi}\frac{xy}{|z|^4}+\frac{2}{\pi} \alpha  \frac{x^3
y - x y^3}{|z|^6}, \quad 0 \neq z \in \C, \quad \alpha \in \R.
$$

The second assumption in (iii) of Theorem \ref{MOV} is that $T$ is
of the form $T=R_P \circ U$, where $R_P$ is the second order Riesz
transform determined by $P$ and $U$ is an invertible operator in the
 Calder\'{o}n--Zygmund algebra $A.$ This is equivalent, as one can
 easily check looking at multipliers in the Fourier transform side,
  to  $|\alpha|<  1.$

In the simple context we have just set the two assumptions of
condition (iii) of Theorem \ref{MOV} are not independent. The reader
can easily check that the structural condition $T=R_P \circ U$
implies the division property, that is, that $P$ divides the fourth
degree term. We will point out later on where this simplifies the
argument.

We start now the proof of \eqref{pu}. Recall that, as we showed in
the preceding Section, there exists a bounded mesurable function $b$
supported on the unit ball $B$ and a constant $c$ such that
\begin{equation}\label{tebe2}
T(b)= \chi_{\C \setminus B}(z) K(z) + c x y \chi_B(z).
\end{equation}
Our goal is to express the second term in the right hand side above
as
\begin{equation}\label{beta}
c x y \chi_B(z) = T(\beta)(z), \quad \text{for almost all}\; z \in
\C,
\end{equation}
where $\beta$ is a bounded measurable function such that
\begin{equation}\label{de}
|\beta(z)| \leqslant \frac{C}{|z|^{3}},\quad |z|\geqslant 2\,.
\end{equation}
We first show that this is enough for \eqref{pu}. The only
difficulty is that $\beta$ is not supported in $B,$ but the decay
inequality  \eqref{de} is an excellent substitute. Set $\gamma =
b-\beta.$ Then ($dA$ is planar Lebesgue measure)
\begin{equation*}
\begin{split}
T^1 f(0) & = \int \chi_{\C \setminus B}(z )K(z)\, f(z) \,dA(z) \\
 & = \int T(\gamma)(z)\,f(z)\,dA(z) \\
 & = \int \gamma(z)\, Tf(z)\,dA(z) \\
 &= \int_{2B} \gamma(z) \, Tf(z)\,dz + \int_{\C \setminus 2B} \gamma(z) \,
 Tf(z)\,dA(z).
\end{split}
\end{equation*}
The first term is clearly less that a constant times $M(TF)(0),$
because $\gamma$ is bounded, and the second too, because of
\eqref{de} with $\beta$ replaced by $\gamma.$

The proof of \eqref{beta} is divided into two steps. The first step
consists in showing that there exists a function $\beta_0$ such that
$$
c x y \chi_B(z) = R(\beta_0)(z), \quad \text{for almost all}\; z \in
\C,
$$
where $R =R_P.$  To find $\beta_0$ let us look for a function $\psi$
such that
\begin{equation}\label{Verdera:psi}
P(\partial)\psi = c x y \chi_B(z).
\end{equation}
Assume that we have found $\psi$ and that it is regular enough so
that
$$
\psi= E* \Delta \psi,
$$
where $E$ is the standard fundamental solution of the Laplacian.
Then
\begin{equation*}
\begin{split}
c x y \chi_B(z) & = P(\partial)\psi = P(\partial)E \star \Delta
\psi\\*[5pt] & = c\, p.v.\,\frac{P(x)}{|z|^{4}} \star \Delta \psi=
R(\beta_0)\,,
\end{split}
\end{equation*} where $\beta_0 =c\, \Delta \psi.$

Taking the Fourier transform in \eqref{Verdera:psi} gives
$$
P(\xi)\hat{\psi}(\xi) = c \,\partial_1 \partial_2
\widehat{\chi_B}(\xi) = c\, u v \,G_3(|\xi|).$$ For the definition
of $G_3$ see the paragraph below \eqref{fu}. Hence
$$
\hat{\psi}(\xi)= c\, G_3(\xi),
$$
where $c$ is some constant. It is a well known fact in the
elementary theory of Bessel functions \cite[p.429]{GrCF} that
$$
c \,G_3(\xi)= \widehat{(1-|z|^2)^2 \chi_B(z)}(\xi)\,.
$$
In other words,
$$
\psi(z)= c \,(1-|z|^2)^2 \chi_B(z\,)\,.
$$
Clearly $\psi$ and its first order derivatives are continuous
functions supported on the closed unit ball $B.$ The second order
derivatives of $\psi$ are supported on $B$ and on $B$ they are
polynomials.  In particular, we get that $\beta_0 = c\, \Delta\psi $
is a function supported on $B,$ which satisfies a Lipschitz
condition on $B$ and satisfies the cancellation property $\int
\beta_0 = c\, \int \Delta \psi = 0$.

It is worth remarking that in the general case, where the spherical
harmonic expansion of the kernel contains many terms, one has to
resort to the division assumption of (iii) in Theorem \ref{MOV} to
complete the proof of the first step.

We proceed now with the second step. Since $T = R \circ U$ we have
$$
c\, x y \chi_B(z) = R(\beta_0)(z) = T(U^{-1}(\beta_0))(z).
$$
Set $\beta=U^{-1}(\beta_0),$ so that \eqref{beta} is satisfied. We
are left with the task of showing that $\beta$ is bounded and
satisfies the decay estimate \eqref{de}.

The inverse of $U$ is an operator in the Calder\'{o}n--Zygmund
algebra $A.$ Thus
$$
\beta= U^{-1}(\beta_0) = (\lambda I + V)(\beta_0) = \lambda \beta_0
+ V(\beta_0),
$$
where $\lambda$ is a real number and $V$ an even convolution smooth
homogeneous Calder\'{o}n--Zygmund operator. The desired decay
estimate for $\beta$ now follows readily, because $\beta_0$ is
supported in the closed ball $B$ and has zero integral. It remains
to show that $V(\beta_0)$ is bounded. At first glance this is quite
unlikely because $V$ is a general even convolution smooth
homogeneous Calder\'{o}n--Zygmund operator and $\beta_0$ has no
global smoothness properties in the plane. Indeed, although
$\beta_0$ is Lipschitz on $B$, it has a jump at the boundary of $B.$
Assume for a moment that $\beta_0 = \chi_B.$  It is then known that
$V(\chi_B)$ is a bounded function because $V$ is an even
Calder\'{o}n--Zygmund operator and the boundary of $B$ is smooth.
Here the fact that the operator is even is crucial as one can see by
considering the action of the Hilbert transform on the interval
$(-1,1).$ We are not going to present the nice argument for the
proof that $V(\beta_0)$ is bounded \cite{MOV}. Let us only mention
that this result for the Beurling transform and smoothly bounded
domains plays a basic role in the regularity theory of certain
solutions of the Euler equation in the plane \cite{Ch}.

\section{The proof in the general case and final comments}
\label{Verdera:Sec:7}

The proof of Theorems \ref{MOV} and \ref{MOPV} in the general case
proceeds in two stages. First one proves the Theorems in the case in
which the spherical harmonics expansion of the kernel contains
finitely many non-zero terms. Then one has to truncate the expansion
of the kernel and see that some of the estimates obtained in the
first step do not depend on the number of terms. This is a delicate
issue at some moments, but necessary to perform a final compactness
argument. In both steps there are difficulties of various types to
be overcome and a major computational issue, lengthly and involved,
which very likely can be substantially simplified by a more clever
argument.

A final word on the proof for the necessity of the division
condition. To show that a polynomial with complex coefficients
divides another, one often resorts to Hilbert's Nullstellensatz, the
zero set theorem of Hilbert, which states that if $P$ is a prime
polynomial with complex coefficients and finitely many variables, to
show that $P$ divides another such polynomial $Q$ one  has to check
only that $Q$ vanishes on the zeros of $P.$ This fails for real
polynomials, as simple examples show. Now, since we are working with
real polynomials we cannot straightforwardly apply Hilbert's
theorem. What saves us is that our real polynomials have a fairly
substantial amount of zeroes, just because they have zero integral
on the unit sphere. We can then jump to the complex case and come
back to the real by checking that the Hausdorff dimension of the
zero set of certain polynomials is big enough.

There are several questions about Theorems \ref{MOV} and \ref{MOPV}
that deserve further study. The first is a potential application to
the David--Semmes problem mentioned in the introduction, which was
the source of the question. Another is the smoothness of the
kernels. It is not known how  to prove the analogs of Theorems
\ref{MOV} and \ref{MOPV} for kernels of moderate smoothness, say of
class $C^m$ for some positive integer $m.$ Finally it is has
recently been shown by Bosch, Mateu and Orobitg that
$$
 \| T^{\star} f  \|_p \leqslant  C \| T f \|_p,\quad f \in L^p(\mathbb{R}^n)
 \,, \quad 1 <  p < \infty\,,
$$
implies any of the three equivalent conditions in Theorems \ref{MOV}
and \ref{MOPV}.

\begin{gracies}
The author is grateful to the organizers of the XXXI Conference in
Harmonic Analysis, held in Rome at the beginning of June 2011 in
honor of Professor A.Fig\`{a}-Talamanca, for the kind invitation to
participate and for their efficient work. The author has been
partially supported by the grants MTM2010-15657 and 2009SGR420.
\end{gracies}

\begin{tabular}{l}
Joan Verdera\\
Departament de Matem\`{a}tiques\\
Universitat Aut\`{o}noma de Barcelona\\
08193 Bellaterra, Barcelona, Catalonia\\
{\it E-mail:} {\tt jvm@mat.uab.cat}
\end{tabular}

\end{document}